\newcommand{\al}{\alpha}
\newcommand{\ga}{\gamma}
\newcommand{\de}{\delta}
\newcommand{\la}{\lambda}
\newcommand{\eps}{\varepsilon}
\newcommand{\iy}{\infty}
\theoremstyle{plain}
\numberwithin{equation}{section}
\newtheorem{thm}{Theorem}[section]
\newtheorem{lem}[thm]{Lemma}
\newtheorem{prop}[thm]{Proposition}
\newtheorem{cor}[thm]{Corollary}
\theoremstyle{definition}
\newtheorem{alg}[thm]{Algorithm}
\newtheorem{ip}[thm]{Inverse Problem}
\theoremstyle{remark}
\newtheorem{remark}[thm]{Remark}
\DeclareMathOperator*{\Res}{Res}
\DeclareMathOperator{\rank}{rank}
\DeclareMathOperator{\diag}{diag}
\begin{document}

\begin{center}
{\large\bf Constructive solution of the inverse spectral problem for the matrix Sturm-Liouville operator}
\\[0.2cm]
{\bf Natalia P. Bondarenko} \\[0.2cm]
\end{center}

\vspace{0.5cm}

{\bf Abstract.} An inverse spectral problem is studied for the matrix Sturm-Liouville operator on a finite interval with the general self-adjoint boundary condition.
We obtain a constructive solution based on the method of spectral mappings for the considered inverse problem. 
The nonlinear inverse problem is reduced to a linear equation in a special Banach space of infinite matrix sequences.
In addition, we apply our results to the Sturm-Liouville operator on a star-shaped graph.
  
\medskip

{\bf Keywords:} inverse spectral problems; matrix Sturm-Liouville operator; general self-adjoint boundary condition; Sturm-Liouville operators on graphs;
constructive solution; method of spectral mappings.

\medskip

{\bf AMS Mathematics Subject Classification (2010):} 34A55 34B09 34B24 34B45 34L40

\vspace{1cm}

\section{Introduction}

The main aim of this paper is to provide a constructive solution of the inverse spectral problem for the matrix Sturm-Liouville operator with the general self-adjoint boundary condition. The operator under consideration corresponds to the following eigenvalue problem $L = L(Q(x), T, H)$:
\begin{gather} \label{eqv}
    -Y'' + Q(x) Y = \la Y, \quad x \in (0, \pi), \\ \label{bc}
    Y(0) = 0, \quad V(Y) := T (Y'(\pi) - H Y(\pi)) - T^{\perp} Y(\pi) = 0.
\end{gather}
Here $Y(x) = [y_j(x)]_{j = 1}^m$ is an $m$-element vector function; $Q(x) = [q_{jk}(x)]_{j, k = 1}^m$ is an $m \times m$ matrix function, called the {\it potential}, $q_{jk} \in L_2(0, \pi)$; $\la$ is the spectral parameter.
Denote by $\mathbb C^m$ and $\mathbb C^{m \times m}$ the spaces of complex-valued $m$-element column vectors and $m \times m$ matrices, respectively.
It is supposed that $T \in \mathbb C^{m \times m}$ is an orthogonal projector in $\mathbb C^m$, $T^{\perp} = I_m - T$, where $I_m \in \mathbb C^{m \times m}$ is the unit matrix, and $H = T H T$. The matrices $Q(x)$ and $H$ are assumed to be Hermitian, i.e. $Q(x) = Q^{\dagger}(x)$ a.e. on $(0, \pi)$ and $H = H^{\dagger}$, where the symbol $\dagger$ denotes the conjugate transpose. Under these restrictions, the boundary value problem $L$ is self-adjoint.

We have imposed the boundary conditions~\eqref{bc}, since the problem \eqref{eqv}-\eqref{bc} generalizes eigenvalue problems for Sturm-Liouville operators on a star-shaped graph (see, e.g., \cite{Piv07, Bond19}). Differential operators on geometrical graphs, also called quantum graphs, have applications in mechanics, organic chemistry, mesoscopic physics, nanotechnology, theory of waveguides and other branches of science and engineering (see \cite{Nic85, LLS94, Kuch02, BCFK06, PP04} and references therein). 


Inverse problems of spectral analysis consist in reconstruction of operators, by using their spectral information. The most complete results in inverse problem theory are obtained for scalar Sturm-Liouville operators $\ell y = -y'' + q(x) y$ (see the monographs \cite{Mar77, Lev84, PT87, FY01}). Analysis of an inverse spectral problem usually includes the following steps.
\begin{enumerate}
    \item Uniqueness theorem.
    \item Constructive solution.
    \item Necessary and sufficient conditions of solvability.
    \item Local solvability and stability.
    \item Numerical methods.
\end{enumerate}

Uniqueness of solution in most cases is the simplest issue in inverse problem theory. Constructive solution usually means the reduction of a nonlinear inverse problem to a linear equation in a Banach space. In particular, the Gelfand-Levitan method \cite{Lev84} reduces inverse problems to Fredholm integral equations of the second kind. In the method of spectral mappings \cite{FY01, Yur02}, the main role is played by a linear equation in a space of infinite bounded sequences. By relying on these constructive methods, necessary and sufficient conditions of solvability have been obtained, local solvability and stability have been proved and also numerical techniques for solution \cite{RS92, IY08} have been developed for various classes of inverse spectral problems. We also have to mention the historically first method of Borg \cite{FY01, Borg46}. Initially, this method was local.
Further it was developed by Trubowitz \cite{PT87} and was used for investigating global solvability of inverse problems.

Matrix Sturm-Liouville operators in the form $\ell Y = -Y'' + Q(x) Y$, where $Q(x)$ is a matrix function, appeared to be more difficult for investigation. The main difficulties are caused by a complicated structure of spectral characteristics. Uniqueness issues of inverse problem theory for matrix Sturm-Liouville operators on finite intervals have been studied in \cite{Car02, Mal05, Chab04, Yur06-uniq, Sh07, Xu19}. In \cite{Yur06, Bond11}, a constructive solution, based on the method of spectral mappings, has been developed for the inverse problem for equation~\eqref{eqv} under the Robin boundary conditions
\begin{equation} \label{Robin}
Y'(0) - h Y(0) = 0, \quad Y'(\pi) + H Y(\pi) = 0,
\end{equation}
where $h, H \in \mathbb C^{m \times m}$, $h = h^{\dagger}$, $H = H^{\dagger}$. Chelkak and Korotyaev \cite{CK09} have given characterization of the spectral data
for the problem~\eqref{eqv}, \eqref{Robin} (in other words, necessary and sufficient conditions for the inverse problem solvability) in the case of asymptotically simple spectrum. In \cite{Bond11, Bond19-tamkang}, spectral data characterization has been obtained in the general case, with no restrictions on the spectrum,
for self-adjoint and non-self-adjoint eigenvalue problems in the form~\eqref{eqv},\eqref{Robin}. Analogous results for the Dirichlet boundary conditions
$Y(0) = Y(\pi) = 0$ are provided in \cite{Bond12}. Mykytyuk and Trush \cite{MT10} have given spectral data characterization for the matrix Sturm-Liouville operator with a singular potential from the class $W_2^{-1}$.

There are significantly less results on inverse matrix Sturm-Liouville problems with the general self-adjoint boundary conditions in the form~\eqref{bc}.
In \cite{Xu19}, several uniqueness theorems have been proved for such inverse problems on a finite interval. Harmer \cite{Har02, Har05} 
has studied inverse scattering on the half-line for the matrix Sturm-Liouville operator with the general self-adjoint boundary 
condition at the origin. However, we find inverse problems for matrix Sturm-Liouville operators on the half-line 
\cite{AM63, Har02, Har05, FY07, Bond15} and on the line \cite{CD77, Wad80, Olm85, AG98, Bond17-line} to be in some sense easier for investigation, than inverse problems on a finite interval. Namely, the spectrum of the matrix Sturm-Liouville operators on a finite interval can contain infinitely many groups of multiple or asymptotically multiple eigenvalues, while operators on infinite domains usually have a bounded set of eigenvalues.

The main goal of this paper is to develop a constructive algorithm for solving an inverse spectral problem for the matrix 
Sturm-Liouville operator~\eqref{eqv}-\eqref{bc}. In the future research, we plan to use this algorithm to obtain characterization 
of the spectral data, to investigate local solvability and stability of the considered inverse problem. 
As far as we know, all these issues have not been studied before for the operator in the form~\eqref{eqv}-\eqref{bc}. 
One can also develop numerical methods, basing on our algorithm. In addition, we intend to apply our results to inverse problems for differential operators on graphs \cite{Yur05, Yur16}.

Let us define the spectral data, used for reconstruction of the considered operator.
Denote $p := \rank(T)$ and assume that $1 \le p < m$. Then $\rank(T^{\perp}) = m - p$.
In \cite{Bond19}, asymptotic properties of the spectral characteristics of the problem \eqref{eqv}-\eqref{bc} have been studied. In particular, it has been proved that the spectrum of $L$ is a countable set of real eigenvalues $\{ \la_{nk} \}_{n \in \mathbb N, \, k = \overline{1, m}}$, such that
$$
\arraycolsep=1.4pt\def\arraystretch{1.5}
\left\{ 
\begin{array}{ll}
\sqrt{\la_{nk}} = n - \frac{1}{2} + O\left( n^{-1} \right),  & \quad k = \overline{1, p}, \\
\sqrt{\la_{nk}} = n + O\left( n^{-1}\right), & \quad 
k = \overline{p + 1, m},
\end{array}\right. \quad n \in \mathbb N.
$$
The more detailed eigenvalue asymptotics are provided in Section 2 of our paper.
Note that multiple eigenvalues are possible, and they occur in the sequence $\{ \la_{nk} \}_{n \in \mathbb N, \, k = \overline{1, m}}$ multiple times according to their multiplicities. One can also assume that $\la_{n_1, k_1} \le \la_{n_2, k_2}$, if $(n_1, k_1) < (n_2, k_2)$, i.e. $n_1 < n_2$ or $n_1 = n_2, \, k_1 < k_2$.

Let $\Phi(x, \la)$ be the $m \times m$ matrix solution of equation~\eqref{eqv}, satisfying the boundary conditions $\Phi(0, \la) = I_m$, $V(\Phi) = 0$. Define $M(\la) := \Phi'(0, \la)$. The matrix functions $\Phi(x, \la)$ and $M(\la)$ are called {\it the Weyl solution} and {\it the Weyl matrix} of $L$, respectively. The notion of the Weyl matrix generalizes the notion of the Weyl function for scalar Sturm-Liouville operators. Weyl functions and their generalizations play an important role in inverse problem theory for various classes of differential operators (see \cite{Mar77, FY01, Yur06, Bond19-tamkang}).
It can be easily shown that $M(\la)$ is a meromorphic matrix function. All the singularities of $M(\la)$ are simple poles, which coincide with the eigenvalues $\{ \la_{nk} \}_{n \in \mathbb N, \, k = \overline{1, m}}$.
Define the weight matrices 
\begin{equation} \label{defal}
\al_{nk} = -\Res_{\la = \la_{nk}} M(\la).
\end{equation}

The collection $\{ \la_{nk}, \al_{nk} \}_{n \in \mathbb N, \, k = \overline{1, m}}$ is called {\it the spectral data} of $L$. This paper is devoted to the following inverse problem.

\begin{ip} \label{ip:1}
Given the spectral data $\{ \la_{nk}, \al_{nk} \}_{n \in \mathbb N, \, k = \overline{1, m}}$, recover the problem $L$, i.e. find the potential $Q(x)$ and the matrices $T$, $H$.
\end{ip}

The statement of Inverse Problem~\ref{ip:1} generalizes the classical statement by Marchenko \cite{Mar77, FY01}.
Denote by $\{ \la_n \}_{n \ge 1}$ of the boundary value problem $-y'' + q(x) y = \la y$, $y(0) = y(\pi) = 0$, and by $\{ y_n(x) \}_{n \ge 1}$ the corresponding eigenfunctions, normalized by the condition $y_n'(0) = 1$. The inverse problem by Marchenko consists in recovering the potential $q(x)$ from the spectral data $\{ \la_n, \al_n \}_{n \ge 1}$, where $\al_n := \int_0^{\pi} y_n^2(x) \, dx$ are the so-called weight numbers or norming constants.
Spectral data similar to $\{ \la_{nk}, \al_{nk} \}_{n \in \mathbb N, \, k = \overline{1, m}}$ has been used for reconstruction of matrix Sturm-Liouville operators in~\cite{Yur06, CK09, Bond11, Bond19-tamkang, MT10} and other papers.

In order to solve Inverse Problem~\ref{ip:1}, we develop the ideas of the method of spectral mappings \cite{FY01, Yur02}, in particular, of its modification for matrix inverse Sturm-Liouville problems \cite{Yur06, Bond11, Bond19-tamkang, Bond16}. This method is based on the contour integration in the complex plane of the spectral parameter, so it is convenient for working with multiple and asymptotically multiple eigenvalues. Our key idea is to group the eigenvalues by asymptotics, and to use the sums of the weight matrices for each group. That allows us to construct a special Banach space of infinite bounded matrix sequences and to reduce Inverse Problem~\ref{ip:1} to a linear equation in this Banach space.

The paper is organized as follows. In Section~2, we provide preliminaries. In Section~3, the main equation of Inverse Problem~\ref{ip:1}
is derived, its unique solvability is proved, and a constructive algorithm for solving the inverse problems is developed. In Section~3,
we only outline the main idea of our method, while the technical details of the proofs are provided in Section~4.
In Section~5, we apply our results for the matrix Sturm-Liouville problem in the form~\eqref{eqv}-\eqref{bc} to the Sturm-Liouville operators
on a graph. In Section~6, our algorithm for solving Inverse Problem~\ref{ip:1} is illustrated by a numerical example.

\section{Preliminaries}

In this section, we provide asymptotic formulas for the eigenvalues and the weight matrices, obtained in \cite{Bond19}. We also state the uniqueness theorem for solution of Inverse Problem~\ref{ip:1}.

First we need some notations. Define the matrix
\begin{equation} \label{defOmega}
\Omega := \frac{1}{2} \int_0^{\pi} Q(x) \, dx,
\end{equation}
and the polynomials
$$
\mathcal P_1(z) = z^{p-m} \det(z I_m - T (\Omega - H) T), \quad
\mathcal P_2(z) = z^{-p} \det (z I_m - T^{\perp} H T^{\perp} ).
$$
One can easily show that $\mathcal P_1(z)$ and $\mathcal P_2(z)$ are polynomials of the degrees $p$ and $(m - p)$, respectively. Note that the matrices $T (\Omega - H) T$ and $T^{\perp} H T^{\perp}$ are Hermitian. Consequently, they have real eigenvalues, part of which coincide with the roots of $\mathcal P_j(z)$, $j = 1, 2$. Denote the roots of $\mathcal P_1(z)$ by $\{ z_k \}_{k = 1}^p$ and the roots of $\mathcal P_2(z)$ by $\{ z_k \}_{k = p + 1}^m$, counting with the multiplicities and in the nondecreasing order: $z_k \le z_{k + 1}$ for
$k = \overline{1, p-1}$ and $k = \overline{p + 1, m-1}$.

Denote $\rho_{nk} := \sqrt{\la_{nk}}$, $n \in \mathbb N$, $k = \overline{1, m}$. Without loss of generality, we assume that $\la_{nk} \ge 0$. This condition can be achieved by a shift of the spectrum. Then all the numbers $\rho_{nk}$ are real.

Below we use the matrix norm in $\mathbb C^{m \times m}$, induced by the Euclidean norm in $\mathbb C^m$, i.e. $\| A \| = \sqrt{\la_{max}(A^{\dagger} A)}$, where $\la_{max}$ is the maximal eigenvalue of a matrix. The symbol $C$ denotes various constants. The notation $\{ \varkappa_n \}$ is used for various sequences from $l_2$. The notation $\{ K_n \}$ is used for various matrix sequences, such that $\{ \| K_n \| \} \in l_2$.

The assertion of \cite[Theorem~2.1]{Bond19} implies the following asymptotics for the eigenvalues.

\begin{prop} \label{prop:asymptla}
The eigenvalues $\{ \la_{nk} \}_{n \in \mathbb N, \, k = \overline{1, m}}$ satisfy the asymptotic relations
\begin{equation} \label{asymptla}
\arraycolsep=1.4pt\def\arraystretch{1.8}
\left\{ 
\begin{array}{ll}
\rho_{nk} = n - \dfrac{1}{2} + \dfrac{z_k}{\pi (n - 1/2)}+ \dfrac{\varkappa_n}{n},  & \quad k = \overline{1, p}, \\
\rho_{nk} = n + \dfrac{z_k}{\pi n} + \dfrac{\varkappa_n}{n}, & \quad 
k = \overline{p + 1, m},
\end{array}\right. \quad n \in \mathbb N.
\end{equation}
(The sequence $\{ \varkappa_n \}$ may be different for different $k$).
\end{prop}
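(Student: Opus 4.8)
The plan is to localize the eigenvalues as the zeros of a scalar characteristic function and then sharpen the localization by a finite-dimensional perturbation analysis. Let $S(x,\la)$ denote the $m\times m$ matrix solution of \eqref{eqv} with $S(0,\la)=0$, $S'(0,\la)=I_m$. Every solution of \eqref{eqv} vanishing at $x=0$ has the form $S(x,\la)c$ with $c\in\mathbb C^m$, so the $\la_{nk}$ are exactly the zeros of $\Delta(\la):=\det V(S(\cdot,\la))$. First I would record the transformation-operator asymptotics of $S$: for $\rho=\sqrt\la\to\infty$,
$$
S(\pi,\la)=\frac{\sin\rho\pi}{\rho}I_m-\frac{\cos\rho\pi}{\rho^2}\,\Omega+\ldots,\qquad
S'(\pi,\la)=\cos\rho\pi\,I_m+\frac{\sin\rho\pi}{\rho}\,\Omega+\ldots,
$$
where $\Omega$ is the matrix \eqref{defOmega} and the omitted terms are of order $\rho^{-3}e^{|\mathrm{Im}\,\rho|\pi}$ together with Riemann--Lebesgue remainders.

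Next I would pass to an orthonormal basis in which $T=\diag(I_p,0)$, so that $T^\perp=\diag(0,I_{m-p})$ and, by $H=THT$, $H=\diag(H_{11},0)$. Writing $\Omega$ in the corresponding $p\oplus(m-p)$ blocks and substituting into $V(S)=T(S'(\pi)-HS(\pi))-T^\perp S(\pi)$, the matrix $V(S)$ acquires block form with dominant diagonal blocks $\cos\rho\pi\,I_p$ (top-left) and $-\tfrac{\sin\rho\pi}{\rho}I_{m-p}$ (bottom-right), the off-diagonal blocks being $O(\rho^{-1})$ and $O(\rho^{-2})$. Hence, to leading order, $\Delta(\la)\sim(\cos\rho\pi)^p\bigl(-\tfrac{\sin\rho\pi}{\rho}\bigr)^{m-p}$, whose zeros sit at $\rho=n-\tfrac12$ (multiplicity $p$) and $\rho=n$ (multiplicity $m-p$). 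A Rouch\'e argument on small circles then shows that for large $n$ the function $\Delta$ has exactly $p$ zeros near $n-\tfrac12$ and exactly $m-p$ near $n$, which yields the two families and the coarse asymptotics already quoted in the introduction.

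To extract the constants $z_k$ I would zoom into each cluster. Setting $\rho=n-\tfrac12+\delta$ with $\delta\to0$, one has $\cos\rho\pi=-(-1)^{n-1}\pi\delta+\ldots$ and $\sin\rho\pi=(-1)^{n-1}+\ldots$; eliminating the nonsingular bottom-right block by a Schur complement reduces $\Delta=0$ to $\det\bigl(\pi\delta\,I_p-\tfrac{1}{n-1/2}(\Omega_{11}-H_{11})+O(n^{-2})\bigr)=0$, i.e. to the requirement that $\pi(n-\tfrac12)\delta$ be an eigenvalue of $T(\Omega-H)T$ restricted to $\Ran T$. These eigenvalues are precisely the roots $\{z_k\}_{k=1}^p$ of $\mathcal P_1$, so $\delta=\tfrac{z_k}{\pi(n-1/2)}+O(n^{-2})$, which is the first line of \eqref{asymptla}. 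The symmetric computation near $\rho=n+\delta$, now eliminating the nonsingular top-left block, reduces $\Delta=0$ to $\pi n\delta$ being an eigenvalue of the complementary projected first moment $T^\perp\Omega T^\perp$ on $\Ran T^\perp$ --- the roots $\{z_k\}_{k=p+1}^m$ of $\mathcal P_2$ --- whence $\delta=\tfrac{z_k}{\pi n}+O(n^{-2})$ and the second line follows.

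The step I expect to be the main obstacle is the local analysis inside a cluster when the Hermitian perturbation matrix has multiple eigenvalues: the cluster has width $O(n^{-1})$, the same scale on which its zeros are separated, so individual zeros cannot be isolated by a scalar Rouch\'e estimate. I would handle this by treating the Schur-complement determinant as an entire function of $\delta$ with Hermitian-matrix coefficients and invoking stability of the spectrum of $\Omega_{11}-H_{11}$ (resp. of the complementary block) under $O(n^{-1})$ perturbations, and then upgrading the $O(n^{-2})$ error to the sharp $l_2$ form $\varkappa_n/n$ by estimating the oscillatory and higher-order integral remainders uniformly over the cluster. Finally, since the statement is asserted to follow from \cite[Theorem~2.1]{Bond19}, a shorter alternative route is simply to insert $\rho_{nk}=\sqrt{\la_{nk}}$ into the $\la$-asymptotics of that theorem and expand the square root, which gives \eqref{asymptla} directly.
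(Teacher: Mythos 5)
The paper offers no proof of this proposition at all: it is imported verbatim from \cite[Theorem~2.1]{Bond19} (``The assertion of \ldots implies the following asymptotics''), so the honest comparison is between your self-contained sketch and a citation. Your reconstruction is essentially the argument behind the cited theorem, and its skeleton is sound: the eigenvalues are the zeros of $\det V(S(\cdot,\la))$; the transformation-operator expansions of $S(\pi,\la)$, $S'(\pi,\la)$ are correct; in the basis where $T=\diag(I_p,0)$ the block structure of $V(S)$ gives the leading factorization $(\cos\rho\pi)^p(-\sin\rho\pi/\rho)^{m-p}$, Rouch\'e localizes $p$ zeros near $n-\tfrac12$ and $m-p$ near $n$, and the Schur complements correctly produce $\pi(n-\tfrac12)\delta\in\mathrm{spec}\,(\Omega_{11}-H_{11})$ and $\pi n\delta\in\mathrm{spec}\,(\Omega_{22})$ with $O(\rho^{-2})$ off-diagonal contamination that does not disturb the $z_k/(\pi\rho)$ term. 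Two remarks. First, your computation identifies the second family of constants as the eigenvalues of $T^{\perp}\Omega T^{\perp}$ on $\Ran T^{\perp}$; the paper's printed $\mathcal P_2(z)=z^{-p}\det(zI_m-T^{\perp}HT^{\perp})$ is identically $z^{m-p}$ because $H=THT$ forces $T^{\perp}HT^{\perp}=0$, so this is evidently a typo for $T^{\perp}\Omega T^{\perp}$ and your derivation confirms the intended definition. Second, you are right that the genuine technical content is the step you only gesture at: upgrading the crude $O(n^{-2})$ cluster error to the sharp remainder $\varkappa_n/n$ with $\{\varkappa_n\}\in l_2$ (which requires tracking the Riemann--Lebesgue/$L_2$ remainders of the transformation operator uniformly over each cluster) and resolving clusters in which $\Omega_{11}-H_{11}$ or $\Omega_{22}$ has multiple eigenvalues; as a proof proposal this is an acknowledged outline rather than a complete argument, but it is the correct outline, and your closing observation --- that within this paper the proposition follows by merely taking square roots in the $\la$-asymptotics of \cite[Theorem~2.1]{Bond19} --- is exactly what the author does.
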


In order to provide asymptotics for the weight matrices $\{ \al_{nk} \}$, we need additional notations. In view of the definition~\eqref{defal}, if $\la_{n_1, k_1} = \la_{n_2, k_2}$, then $\al_{n_1, k_1} = \al_{n_2, k_2}$. Further we do not need to count such equal weight matrices multiple times. Therefore for every group of multiple eigenvalues $\la_{n_1, k_1} = \la_{n_2, k_2} = \ldots = \la_{n_l, k_l}$, $(n_1, k_1) < (n_2, k_2) < \ldots < (n_l, k_l)$, we define $\al'_{n_1, k_1} = \al_{n_1, k_1}$, $\al'_{n_j, k_j} = 0$, $j = \overline{2, l}$. It is supposed that there are exactly $l$ eigenvalues among $\{ \la_{nk} \}_{n \in \mathbb N, \, k = \overline{1, m}}$ equal to $\la_{n_1, k_1}$. Define the sums
\begin{gather*}
\al_n^{(s)} = \sum_{\substack{k = \overline{1, p} \\ z_k = z_s}} \al'_{nk}, \quad s = \overline{1, p}, \qquad
\al_n^{(s)} = \sum_{\substack{ k = \overline{p + 1, m} \\ z_k = z_s}} \al'_{nk}, \quad s = \overline{p + 1, m}, \\
\al_n^I = \sum_{k = 1}^p \al'_{nk}, \quad \al_n^{II} = \sum_{k = p + 1}^m \al'_{nk}.
\end{gather*}

The following proposition combines the results of Theorems~3.1 and~3.4 from \cite{Bond19}.

\begin{prop} \label{prop:asymptal}
The following asymptotic relations are valid:
\begin{gather} \label{asymptal1}
\al_n^I = \frac{2 (n - 1/2)^2}{\pi} \left( T + \frac{K_n}{n} \right), \quad \al_n^{II} = \frac{2 n^2}{\pi} \left( T^{\perp} + \frac{K_n}{n} \right), \\ \label{asymptal2}
\al_n^{(s)} = \frac{2 n^2}{\pi} (A^{(s)} + K_n), \quad s = \overline{1, m},
\end{gather}
where $n \in \mathbb N$, the matrices $A^{(s)} \in \mathbb C^{m \times m}$, $s = \overline{1, m}$, are uniquely specified by $T$, $\Omega$ and $H$. 
\end{prop}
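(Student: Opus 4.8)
The plan is to reduce all three relations to a single representation of the Weyl matrix and then to evaluate sums of residues by contour integration. Let $C(x, \la)$ and $S(x, \la)$ be the $m \times m$ matrix solutions of \eqref{eqv} with $C(0, \la) = S'(0, \la) = I_m$ and $C'(0, \la) = S(0, \la) = 0$. Since $\Phi(0, \la) = I_m$ and $\Phi'(0, \la) = M(\la)$, one has $\Phi(x, \la) = C(x, \la) + S(x, \la) M(\la)$, and because $M(\la)$ is constant in $x$ the condition $V(\Phi) = V(C) + V(S) M(\la) = 0$ gives
\begin{equation*}
M(\la) = -[V(S)]^{-1} V(C),
\end{equation*}
where $\det V(S)$ is, up to a nonzero factor, the characteristic function whose zeros are the $\la_{nk}$. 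Since equal eigenvalues carry equal weight matrices, each of $\al_n^{I}$, $\al_n^{II}$, $\al_n^{(s)}$ is a sum of \emph{distinct} residues of $M(\la)$, so I would write it as a contour integral $-\frac{1}{2 \pi i} \oint_{\ga} M(\la)\, d\la$, with $\ga$ enclosing exactly the eigenvalues contributing to that sum.

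First I would substitute into $V(S)$ and $V(C)$ the classical large-$\rho$ asymptotics of the fundamental solutions at $x = \pi$, in which $\Omega = \tfrac12 \int_0^{\pi} Q(t)\, dt$ enters at first order. Using $S(\pi, \la) \sim \rho^{-1} \sin \rho \pi\, I_m$, $S'(\pi, \la) \sim \cos \rho \pi\, I_m$ together with $T^2 = T$, $T T^{\perp} = 0$, $H = THT$, the leading part of $V(S)$ splits as $\cos \rho \pi\, T - \rho^{-1} \sin \rho \pi\, T^{\perp}$ plus lower-order corrections governed by $T(\Omega - H)T$ on $\Ran T$ and by $T^{\perp} H T^{\perp}$ on $\Ran T^{\perp}$. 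This already produces the two spectral series: $\cos \rho \pi = 0$ on $\Ran T$ forces $\rho_{nk} \approx n - \tfrac12$, while $\sin \rho \pi = 0$ on $\Ran T^{\perp}$ forces $\rho_{nk} \approx n$, in agreement with \eqref{asymptla}.

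Next I would evaluate the contour integrals. For $\al_n^{II}$ the contour surrounds $\rho = n$; there the $T$-block of $V(S)$ is invertible of order one, whereas the $T^{\perp}$-block behaves like $-\rho^{-1} \sin \rho \pi$, so $[V(S)]^{-1}$ acquires a simple pole on $\Ran T^{\perp}$ with residue proportional to $\rho (\sin \rho \pi)^{-1} T^{\perp}$, while $T^{\perp} V(C)$ stays of order one. Passing to the variable $\rho$ via $\la = \rho^2$ and using $\Res_{\rho = n} \frac{2 \rho^2}{\sin \rho \pi} = \frac{2 (-1)^n n^2}{\pi}$, the sign factors cancel and one obtains the leading term $\frac{2 n^2}{\pi} T^{\perp}$; the analogous computation near $\rho = n - \tfrac12$ yields $\frac{2 (n - 1/2)^2}{\pi} T$ for $\al_n^{I}$. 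For the finer sums $\al_n^{(s)}$ I would pass to a local coordinate $\zeta$ around $\rho = n - \tfrac12$ (resp.\ $\rho = n$), in which the secular equation reduces to $\det(\zeta I_m - T(\Omega - H)T) = 0$ on $\Ran T$ (resp.\ with $T^{\perp} H T^{\perp}$ on $\Ran T^{\perp}$). This is exactly the equation defining the $z_k$, and choosing $\ga$ to enclose only the eigenvalues with $z_k = z_s$ makes the residue equal to $\frac{2 n^2}{\pi}$ (the discrepancy between $(n-1/2)^2$ and $n^2$ being absorbed into an $l_2$ remainder) times the Riesz spectral projection $A^{(s)}$ of $T(\Omega - H)T$ (resp.\ $T^{\perp} H T^{\perp}$) onto the eigenspace for $z_s$. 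These projections depend only on $T$, $\Omega$, $H$ and, summed over the distinct values $z_s$, reproduce $T$ (resp.\ $T^{\perp}$), consistently with the leading terms in \eqref{asymptal1}.

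The main obstacle is the remainder analysis. Two points demand care: drawing the contours at a uniform positive distance from the spectrum (which rests on the eigenvalue separation implicit in \eqref{asymptla} together with uniform resolvent bounds for $[V(S)]^{-1}$ away from its zeros), and showing that after subtracting the explicit leading terms the errors are genuine sequences $\{K_n\}$ with $\{\| K_n \|\} \in l_2$ rather than merely $o(1)$ — this is where the $L_2$ regularity of $Q$ and the $l_2$ remainders of \eqref{asymptla} must be propagated through the residue calculus. A secondary difficulty is the case of coinciding $z_k$: eigenvalues sharing the same $z_s$ need not separate, so they cannot be isolated individually; instead the whole group is enclosed and the projection $A^{(s)}$ absorbs the multiplicity, which is precisely why the assertion is phrased in terms of the grouped sums $\al_n^{(s)}$ rather than the individual $\al'_{nk}$.
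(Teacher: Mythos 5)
First, a point of comparison: the paper does not actually prove Proposition~\ref{prop:asymptal} --- it is imported from Theorems~3.1 and~3.4 of \cite{Bond19}, so there is no in-paper argument to measure your proposal against. Your outline (representing $M(\la) = -[V(S)]^{-1} V(C)$, writing each grouped weight matrix as a contour integral of $-M(\la)$ around the corresponding cluster of poles, and extracting the leading terms from the large-$\rho$ asymptotics of $V(S)$ and $V(C)$) is the standard route for results of this type and is, in broad strokes, how the cited reference proceeds. Your leading-order residue computations for $\al_n^I$ and $\al_n^{II}$ are correct, and the remark that eigenvalues sharing the same $z_s$ must be enclosed as a group rather than separated is the right observation.

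Two things, however, keep this from being a proof. The first you concede yourself: everything in \eqref{asymptal1}--\eqref{asymptal2} beyond the leading term is an $l_2$ statement, and the entire content of the proposition lies in showing that the remainders form sequences $\{K_n\}$ with $\{\|K_n\|\} \in l_2$ rather than merely $o(1)$; your sketch defers exactly this step, so what you have is a strategy, not an argument. The second is a concrete slip in identifying the $A^{(s)}$ for $s = \overline{p+1, m}$: you take the operator governing the second spectral series to be $T^{\perp} H T^{\perp}$. But the standing hypothesis $H = T H T$ forces $T^{\perp} H T^{\perp} = 0$, and, more to the point, the $T^{\perp}$-component of the boundary form is $-T^{\perp} Y(\pi)$, which does not involve $H$ at all: on $\Ran T^{\perp}$ the condition is effectively of Dirichlet type, and its first-order perturbation is produced by the potential, i.e.\ by $T^{\perp} \Omega T^{\perp}$. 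This is confirmed by \eqref{relAs}, where the second group contributes $T^{\perp} \Omega T^{\perp}$ and not $T^{\perp} H T^{\perp}$. (You were likely led astray by the paper's own definition of $\mathcal P_2$, which as printed is inconsistent with $H = THT$ and with \eqref{relAs}.) With $T^{\perp} \Omega T^{\perp}$ in place of $T^{\perp} H T^{\perp}$, your identification of the $A^{(s)}$ as the corresponding Riesz projections --- summing to $T$ and $T^{\perp}$ over the two groups --- is consistent with \eqref{asymptal1} and \eqref{relAs}, but the remainder analysis would still have to be supplied before the claim of \eqref{asymptal2} is established.
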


In Proposition~\ref{prop:asymptal}, the certain formulas for the matrices $\{ A^{(s)} \}_{s = 1}^m$ are not provided, since they are unnecessary for the purposes of this paper. However, by virtue of \cite[Corollary~3.3]{Bond19}, the following relation holds:
\begin{equation} \label{relAs}
\sum_{s \in \mathcal S} z_s A^{(s)} = T (\Omega - H) T + T^{\perp} \Omega T^{\perp} =: \Theta,
\end{equation}
where
$$
\mathcal S := \{ s =\overline{1, p} \colon s = 1 \: \text{or} \: z_s \ne z_{s-1} \} \cup \{ s= \overline{p + 1, m} \colon s = p+1 \: \text{or} \: z_s \ne z_{s-1}\}. 
$$

By virtue of \cite[Theorem~3.1]{Xu19}, the weight matrices are Hermitian, non-negative definite: $\al_{nk} = \al^{\dagger}_{nk} \ge 0$, $n \in \mathbb N$, $k = \overline{1, m}$. This fact together with Proposition~\ref{prop:asymptal} yield the estimate
\begin{equation} \label{estal}
\al_{nk} = O\left( n^2\right), \quad n \in \mathbb N, \quad k = \overline{1, m}.
\end{equation}

Along with the problem $L$, we consider the problem $\tilde L = L(\tilde Q(x), \tilde T, \tilde H)$ of the same form \eqref{eqv}-\eqref{bc} as $L$, but with different coefficients $\tilde Q(x)$, $\tilde T$ and $\tilde H$. We agree that if a symbol $\ga$ denotes an object related to $L$, the symbol $\tilde \ga$ with tilde denotes the similar object related to $\tilde L$.

\begin{prop} \label{prop:asympt3}
Let two problems $L$ and $\tilde L$ be such that $T = \tilde T$ and $\Theta = \tilde \Theta$. Then 
\begin{gather*}
\rho_{nk} = \tilde \rho_{nk} + \frac{\varkappa_{n}}{n}, \quad
n \in \mathbb N, \quad k = \overline{1, m}, \quad \{ \varkappa_{n} \} \in l_2, \\
\al_n^I = \tilde \al_n^I + n K_n, \quad
\al_n^{II} = \tilde \al_n^{II} + n K_n, \quad
\al_n^{(s)} = \tilde \al_n^{(s)} + n^2 K_n, \quad
s = \overline{1, m}, \quad n \in \mathbb N.
\end{gather*}
\end{prop}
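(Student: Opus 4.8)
The plan is to reduce all three asymptotic identities to a single algebraic statement: that the objects governing the leading terms in Propositions~\ref{prop:asymptla} and~\ref{prop:asymptal} — namely the projector $T$, the ordered scalars $\{z_k\}_{k=1}^m$, and the matrices $\{A^{(s)}\}_{s\in\mathcal S}$ — depend on $(Q,T,H)$ only through the pair $(T,\Theta)$. Granting this, the hypotheses $T=\tilde T$ and $\Theta=\tilde\Theta$ immediately yield $z_k=\tilde z_k$ and $A^{(s)}=\tilde A^{(s)}$, and the three claimed relations then follow by differencing the asymptotics for $L$ and $\tilde L$ term by term.

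First I would record the block structure of $\Theta$. The computation $T\Theta=\Theta T=T(\Omega-H)T$ (which uses $T^2=T$ and $H=THT$) shows that $\Theta$ is reduced by the orthogonal subspaces $\Ran T$ and $\Ran T^{\perp}$, with blocks $T\Theta T=T(\Omega-H)T$ and $T^{\perp}\Theta T^{\perp}=T^{\perp}\Omega T^{\perp}$. By their construction in \cite{Bond19}, the matrices $A^{(s)}$ are Hermitian and non-negative definite (being limits $\tfrac{\pi}{2n^2}\al_n^{(s)}$ of sums of the non-negative weight matrices), satisfy the normalizations $\sum_{s\le p}A^{(s)}=T$ and $\sum_{s>p}A^{(s)}=T^{\perp}$ (read off from \eqref{asymptal1}--\eqref{asymptal2} via $\al_n^I=\sum_{s\le p}\al_n^{(s)}$), and are mutually annihilating projectors. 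Non-negativity and the normalizations already force $\Ran A^{(s)}\subseteq\Ran T$ for $s\le p$ and $\Ran A^{(s)}\subseteq\Ran T^{\perp}$ for $s>p$, so \eqref{relAs} splits into the block identities $T\Theta T=\sum_{s\le p}z_s A^{(s)}$ and $T^{\perp}\Theta T^{\perp}=\sum_{s>p}z_s A^{(s)}$. With the $z_s$ distinct on each block, these are exactly the spectral decompositions of the Hermitian blocks of $\Theta$; hence both the ordered list $\{z_k\}_{k=1}^m$ and the projectors $\{A^{(s)}\}_{s\in\mathcal S}$ are uniquely recovered from $(T,\Theta)$, giving $z_k=\tilde z_k$ for $k=\overline{1,m}$ and $A^{(s)}=\tilde A^{(s)}$ for $s\in\mathcal S$. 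I expect the genuinely non-formal point here to be the mutual orthogonality of the $A^{(s)}$: relation \eqref{relAs} with the normalizations alone does not pin down the $A^{(s)}$ once some block carries three or more distinct values $z_s$, so the projector property must be taken from \cite{Bond19} rather than derived from the stated facts. The equality $A^{(s)}=\tilde A^{(s)}$ is indispensable, since in \eqref{asymptal2} a nonzero constant matrix $A^{(s)}-\tilde A^{(s)}$ would produce a term of exact order $n^2$ that cannot be absorbed into $n^2 K_n$.

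Finally I would subtract the asymptotics. For the eigenvalues, differencing \eqref{asymptla} cancels the $z_k/(\pi(n-1/2))$ and $z_k/(\pi n)$ terms (as $z_k=\tilde z_k$) and leaves $\rho_{nk}-\tilde\rho_{nk}=(\varkappa_n-\tilde\varkappa_n)/n$, where the difference of two $l_2$ sequences is again in $l_2$. For $\al_n^I$ and $\al_n^{II}$, differencing \eqref{asymptal1} cancels the leading $T$ and $T^{\perp}$ (as $T=\tilde T$) and gives $\tfrac{2(n-1/2)^2}{\pi n}(K_n-\tilde K_n)=n\cdot\tfrac{2(n-1/2)^2}{\pi n^2}(K_n-\tilde K_n)$, whose bounded scalar factor leaves an expression of the form $nK_n$; the $\al_n^{II}$ case is identical. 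For $\al_n^{(s)}$, differencing \eqref{asymptal2} cancels $A^{(s)}=\tilde A^{(s)}$ and leaves $\tfrac{2n^2}{\pi}(K_n-\tilde K_n)=n^2K_n$. Throughout one uses only the elementary closure of the symbol classes $\{\varkappa_n\}$ and $\{K_n\}$ under differences and under multiplication by bounded scalar sequences.
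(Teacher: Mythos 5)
Your proof is correct in substance. Note, however, that the paper itself offers no proof of Proposition~\ref{prop:asympt3}: it sits in the Preliminaries among facts imported from \cite{Bond19}, so there is no in-paper argument to compare against, and your write-up essentially supplies the argument the author intends the reader to reconstruct. The reduction to the statement that $\{z_k\}$ and $\{A^{(s)}\}_{s\in\mathcal S}$ depend on $(Q,T,H)$ only through $(T,\Theta)$, followed by term-by-term differencing of \eqref{asymptla}--\eqref{asymptal2}, is exactly the right route, and your closing computations (closure of the classes $\{\varkappa_n\}$, $\{K_n\}$ under differences and bounded scalar multiples, cancellation of the leading terms $T$, $T^{\perp}$, $z_k$, $A^{(s)}$) are all sound. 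Two remarks. First, you correctly identify the one genuinely external input: the present paper only asserts that the $A^{(s)}$ are ``uniquely specified by $T$, $\Omega$ and $H$,'' which is weaker than the hypothesis $T=\tilde T$, $\Theta=\tilde\Theta$; the mutual orthogonality (equivalently, the explicit formulas exhibiting $A^{(s)}$ as spectral projections of the blocks $T\Theta T$ and $T^{\perp}\Theta T^{\perp}$) must indeed be taken from \cite{Bond19}, and your honest flagging of this is appropriate rather than a defect, since the identities \eqref{asymptal1}--\eqref{asymptal2} and \eqref{relAs} alone do not determine the $A^{(s)}$ when a block carries three or more distinct $z_s$. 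Second, your block reading of $\Theta$ for the $z_k$ with $k>p$ is the right one: as literally printed, $\mathcal P_2$ involves $T^{\perp}HT^{\perp}$, which vanishes under the standing assumption $H=THT$, so the roots $\{z_k\}_{k=p+1}^m$ are in any case controlled by $T^{\perp}\Theta T^{\perp}=T^{\perp}\Omega T^{\perp}$ and your conclusion $z_k=\tilde z_k$ stands either way.
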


The next proposition is the uniqueness theorem for Inverse Problem~\ref{ip:1}.

\begin{prop} \label{prop:uniq}
Suppose that $\la_{nk} = \tilde \la_{nk}$ and $\al_{nk} = \tilde \al_{nk}$ for all $n \in \mathbb N$, $k = \overline{1, m}$. Then $Q(x) = \tilde Q(x)$ for a.a. $x \in (0, \pi)$, $T = \tilde T$, $H = \tilde H$.
\end{prop}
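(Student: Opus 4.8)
The plan is to prove the stronger assertion that the Weyl matrix determines the operator, and to reduce the theorem to the observation that the spectral data determine $M(\la)$. First I would recall that $M(\la)$ and $\tilde M(\la)$ are meromorphic matrix functions whose poles are exactly the eigenvalues $\{\la_{nk}\}$, $\{\tilde\la_{nk}\}$, with $\Res_{\la=\la_{nk}}M(\la)=-\al_{nk}$ and similarly for $\tilde M$. Under the hypotheses $\la_{nk}=\tilde\la_{nk}$, $\al_{nk}=\tilde\al_{nk}$, the difference $M(\la)-\tilde M(\la)$ has only removable singularities and hence extends to an entire function. Using the asymptotics of the Weyl matrix, whose leading behaviour as $|\la|\to\infty$ off the real axis is universal and whose remaining terms decay, I would estimate $M-\tilde M$ on a sequence of expanding circles $|\la|=r_N$ chosen to pass between the clusters of eigenvalues, obtaining $\|M(\la)-\tilde M(\la)\|\to 0$ there. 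By the maximum modulus principle this forces $M(\la)\equiv\tilde M(\la)$; here Propositions~\ref{prop:asymptla}, \ref{prop:asymptal} and~\ref{prop:asympt3} supply the required uniform control.

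Next I would introduce the matrix solutions $C(x,\la)$, $S(x,\la)$ of~\eqref{eqv} with $C(0,\la)=S'(0,\la)=I_m$ and $C'(0,\la)=S(0,\la)=0$, so that $S$ is the solution meeting the left condition $Y(0)=0$ and the Weyl solution is $\Phi=C+SM$. I would form the $2m\times 2m$ transfer matrix
$$P(x,\la)=\begin{pmatrix} S & \Phi \\ S' & \Phi'\end{pmatrix}(x,\la)\,\begin{pmatrix} \tilde S & \tilde\Phi \\ \tilde S' & \tilde\Phi'\end{pmatrix}^{-1}(x,\la).$$
The determinant of the right factor is the nonzero constant $(-1)^m$ (its $x$-derivative vanishes and its value at $x=0$ is computed from the initial data), so $P$ is well defined; and since $\Phi=C+SM$, $\tilde\Phi=\tilde C+\tilde S\tilde M$ with $M\equiv\tilde M$, all $\la$-poles cancel and $P(x,\cdot)$ is entire for each fixed $x$. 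Then, using the standard asymptotics of $C,S$ (of the form $\cos\rho x\,I_m$, $\rho^{-1}\sin\rho x\,I_m$ with $\rho=\sqrt\la$, plus lower-order matrix corrections) together with those of $M$, I would show $P(x,\la)\to I_{2m}$ as $|\la|\to\infty$, and conclude by Liouville's theorem that $P\equiv I_{2m}$, i.e. $S\equiv\tilde S$ and $\Phi\equiv\tilde\Phi$ identically in $x$ and $\la$.

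Finally I would read off the coefficients. From $S\equiv\tilde S$ and the equations $-S''+QS=\la S$, $-\tilde S''+\tilde Q\tilde S=\la\tilde S$ one gets $(Q-\tilde Q)S\equiv 0$; since $\det S(x,\cdot)\not\equiv 0$ for a.e. $x$, this yields $Q(x)=\tilde Q(x)$ a.e. on $(0,\pi)$. From $\Phi\equiv\tilde\Phi$ near $x=\pi$ one has $V(\Phi)=\tilde V(\Phi)=0$ for all $\la$; writing $V(Y)=TY'(\pi)-(TH+T^{\perp})Y(\pi)$ and using that the pairs $(\Phi(\pi,\la),\Phi'(\pi,\la))$ span $\mathbb C^{2m}$ as $\la$ varies, I would deduce $V\equiv\tilde V$ as a form. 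Comparing the coefficients of $Y'(\pi)$ gives $T=\tilde T$, and comparing the coefficients of $Y(\pi)$, together with the normalisations $T^2=T=T^{\dagger}$ and $H=THT$, gives $H=\tilde H$.

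The main obstacle I anticipate is the asymptotic analysis underlying the two applications of Liouville's theorem: one must control the matrix Weyl solution and $M(\la)$ \emph{uniformly} on contours that separate the clusters of multiple and asymptotically multiple eigenvalues, where the naive estimates degrade, and the non-commutativity of the matrix coefficients complicates both the cancellation of principal parts in $P$ and the tracking of lower-order terms. A secondary subtlety, specific to the general self-adjoint condition~\eqref{bc}, is the final recovery of $(T,H)$: one must verify that the canonical parametrisation by an orthogonal projector $T$ and a Hermitian $H=THT$ is uniquely pinned down by the boundary form, which rests on the spanning property of $(\Phi(\pi,\la),\Phi'(\pi,\la))$.
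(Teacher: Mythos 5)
Your route is genuinely different from the paper's. The paper gives no self-contained argument for Proposition~\ref{prop:uniq}: it either cites Xu's uniqueness theorem or derives the result from the constructive machinery of Section~3 (with coinciding spectral data one takes $\tilde L$ itself as the model problem, the operator $\tilde R(x)$ and the function $\eps_0(x)$ vanish identically, and Theorems~\ref{thm:solve} and~\ref{thm:findQH} give $Q=\tilde Q$, $H=\tilde H$). You instead run the classical spectral-mappings uniqueness argument: spectral data determine $M(\la)$, then Liouville's theorem applied to the block matrix $P(x,\la)$ yields $S\equiv\tilde S$ and $\Phi\equiv\tilde\Phi$. Your first two steps are sound: the determinant computation, the cancellation of principal parts when $M\equiv\tilde M$ (using $M(\bar\la)=M^{\dagger}(\la)$ to express the inverse block via $\tilde S^{\dagger}(x,\bar\la)$, $\tilde\Phi^{\dagger}(x,\bar\la)$), and the limit $P\to I_{2m}$ are all standard, if laborious in the matrix setting; the recovery of $Q$ from $(Q-\tilde Q)S\equiv 0$ is also fine. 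Your proof is more self-contained; the paper's gets uniqueness for free once the main equation is built.

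One step, however, fails as written. The pairs $(\Phi(\pi,\la),\Phi'(\pi,\la))$ cannot span $\mathbb C^{2m}$: since $V(\Phi)=0$ for every $\la$, all these columns lie in $\ker V$, which is an $m$-dimensional subspace of $\mathbb C^{2m}$ (the map $(u,v)\mapsto T(v-Hu)-T^{\perp}u$ is onto $\mathbb C^m$); if they spanned $\mathbb C^{2m}$ you would conclude $V\equiv 0$. So "$V\equiv\tilde V$ as a form" does not follow, and is not what you need. The correct claim is that the columns span all of the $m$-dimensional subspace $\ker V$; then $\ker\tilde V\supseteq\ker V$ forces equality by dimension count, and the normalisation ($T$ an orthogonal projector, $H=THT$ Hermitian) does determine $(T,H)$ uniquely from the kernel, since $\Ran T$ is the projection of $\ker V$ onto the first component and $Hu=Tv$ for $(u,v)\in\ker V$ with $u\in\Ran T$. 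Proving the spanning of $\ker V$ needs asymptotics of $\Phi(\pi,\la)$ and $\Phi'(\pi,\la)$ that the paper only partially records. A cleaner repair: get $T=\tilde T$ directly from \eqref{asymptal1}, since equal weight matrices give $T=\lim_{n}\frac{\pi}{2n^2}\al_n^I=\tilde T$; then, with $Q=\tilde Q$ and $T=\tilde T$ known, subtracting the two boundary conditions applied to $\Phi$ gives $T(H-\tilde H)\Phi(\pi,\la)=0$, and the asymptotics $\Phi(\pi,-\tau^2)=2T\exp(-\tau\pi)(1+O(\tau^{-1}))$ — exactly the device used in Step~2 of the paper's proof of Theorem~\ref{thm:findQH} — yield $T(H-\tilde H)T=0$, i.e. $H=\tilde H$.
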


Proposition~\ref{prop:uniq} can be derived from the uniqueness results of Xu \cite{Xu19} for the matrix Sturm-Liouville operator 
with the two boundary conditions in the general self-adjoint form. Another way to prove Proposition~\ref{prop:uniq} is 
presented in Section~3. There we show that every solution of Inverse Problem~\ref{ip:1} corresponds to a solution of the main equation~\eqref{main}. Further the unique solvability of the main equation is proved, which implies Proposition~\ref{prop:uniq}.

\section{Inverse Problem Solution}

In this section, a constructive solution of Inverse Problem~\ref{ip:1} is obtained. We start with a choice of a model problem $\tilde L$
of the same form as $L$, but with different coefficients. Then Inverse Problem~\ref{ip:1} is reduced to the linear equation \eqref{mainS}, by using contour 
integration in the $\la$-plane (see Lemma~\ref{lem:contour}). Further we group the eigenvalues by asymptotics and introduce a special
Banach space $B$. It is shown that the linear equation \eqref{mainS} can be represented as the equation~\eqref{main} in $B$. Later on,
we prove the unique solvability of the main equation~\eqref{main} (see Theorem~\ref{thm:solve}). Further in this section, the solution
of the main equation is used for constructing $Q(x)$ and $H$ (see Theorem~\ref{thm:findQH}). Finally, we arrive at Algorithm~\ref{alg:ip}
for solving Inverse Problem~\ref{ip:1}. The results in this section are presented schematically. We provide auxiliary lemmas and the proofs
in the next section.

Let the spectral data $\{ \la_{nk}, \al_{nk} \}_{n \in \mathbb N, \, k = \overline{1, m}}$ of some unknown boundary value problem 
$L = L(Q(x), T, H)$ be given. Our goal is to construct the solution of Inverse Problem~\ref{ip:1}, i.e. to find $Q(x)$, $T$ and $H$.

Further we need a model problem $\tilde L = L(\tilde Q(x), \tilde T, \tilde H)$, satisfying the conditions $T = \tilde T$ and $\Theta = \tilde \Theta$. One can construct such model problem, by using the following algorithm.

\begin{alg} \label{alg:model}
Let the spectral data $\{ \la_{nk}, \al_{nk} \}_{n \in \mathbb N, \, k = \overline{1, m}}$ be given. We have to construct the model problem $\tilde L$.
\begin{enumerate}
\item Find $p$, relying on the eigenvalue asymptotics~\eqref{asymptla}.
\item Construct the matrices $\{ \al'_{nk} \}_{n \in \mathbb N, \, k = \overline{1, m}}$, $\{ \al_n^{(s)} \}_{n \in \mathbb N, \, s = \overline{1, m}}$ and $\{ \al_n^I \}_{n \in \mathbb N}$, by their definitions in Section~2.
\item Construct the matrices $T := \lim\limits_{n \to \iy} \frac{\pi}{2 n^2} \al_n^I$, $T^{\perp} := I_m - T$.
\item Find the numbers
\begin{align*}
& z_k = \lim_{n \to \iy} \left( \sqrt{\la_{nk}} - \Bigl( n - \tfrac{1}{2}\Bigr)\right) \pi \Bigl( n - \tfrac{1}{2}\Bigr), \quad k = \overline{1, p}, \\
& z_k = \lim_{n \to \iy} (\sqrt{\la_{nk}} - n) \pi n, \quad k = \overline{p + 1, m}.
\end{align*}
\item Construct the matrices
$$
A^{(s)} = \lim_{n \to \iy} \frac{\pi}{2 n^2} \al_n^{(s)}, \quad s \in \mathcal S.
$$
\item Calculate
$$
\Theta := \sum_{s \in \mathcal S} z_s A^{(s)}.
$$
\item Define $\tilde Q(x) \equiv \frac{2}{\pi} \Theta$, $x \in (0, \pi)$, $\tilde T := T$, $\tilde H := 0$, $\tilde L := L(\tilde Q(x), \tilde T, \tilde H)$.
\end{enumerate}
\end{alg}

The relation \eqref{relAs} guarantees that $\Theta = \tilde \Theta$. Consequently, the asymptotic relations of Proposition~\ref{prop:asympt3} hold for $L$ and $\tilde L$. 

Let us proceed to the derivation of the main equation. Denote by $S(x, \la)$ the matrix solution of equation~\eqref{eqv} under the initial conditions $S(0, \la) = 0$, $S'(0, \la) = I_m$. The following notation will be used for the matrix Wronskian: $\langle Z, Y\rangle = Z Y' - Z' Y$.
Define
\begin{equation} \label{defD}
   D(x, \la, \mu) = \frac{\langle S^{\dagger}(x, \bar \la), S(x, \mu) \rangle}{\la - \mu}.
\end{equation}

Introduce the notations 
\begin{gather*}
\la_{nk0} = \la_{nk}, \quad \la_{nk1} = \tilde \la_{nk}, \quad \rho_{nk0} = \rho_{nk}, \quad \rho_{nk1} = \tilde \rho_{nk}, \\
\al_{nk0} = \al_{nk}, \quad \al_{nk1} = \tilde \al_{nk}, \quad \al'_{nk0} = \al'_{nk}, \quad \al'_{nk1} = \tilde \al'_{nk}, \\
S_{nk0}(x) = S(x, \la_{nk0}), \quad S_{nk1}(x) = S(x, \la_{nk1}), \quad \tilde S_{nk0}(x) = \tilde S(x, \la_{nk0}), \quad 
\tilde S_{nk1}(x) = \tilde S(x, \la_{nk1}).
\end{gather*}

Using the method of spectral mappings \cite{FY01, Yur02}, we prove the following lemma.

\begin{lem} \label{lem:contour}
The following relations hold for $x \in [0, \pi]$, $n, r \in \mathbb N$, $k, q = \overline{1, m}$, $s, \eta = 0, 1$:
\begin{equation} \label{mainS}
\tilde S_{nks}(x) = S_{nks}(x) + \sum_{l = 1}^{\iy} \sum_{j = 1}^m (S_{lj0}(x) \al'_{lj0} \tilde D(x, \la_{lj0}, \la_{nks}) -
S_{lj1}(x) \al'_{lj1} \tilde D(x, \la_{lj1}, \la_{nks})),
\end{equation}\vspace*{-20pt}
\begin{multline} \label{contD}
    \al'_{rq\eta} \tilde D(x, \la_{rq\eta}, \la_{nks}) - \al'_{rq\eta} D(x, \la_{rq\eta}, \la_{nks}) = \sum_{l = 1}^{\iy} 
\sum_{j = 1}^m \bigl(\al'_{rq\eta} D(x, \la_{rq\eta}, \la_{lj0}) \al'_{lj0} \tilde D(x, \la_{lj0}, \la_{nks}) \\ - \al'_{rq\eta} D(x, \la_{rq\eta}, \la_{lj1}) \al'_{lj1} \tilde D(x, \la_{lj1}, \la_{nks}) \bigr).
\end{multline}
\end{lem}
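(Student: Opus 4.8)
**The plan is to derive both identities \eqref{mainS} and \eqref{contD} by the standard contour-integration machinery of the method of spectral mappings, treating the Weyl solution and the function $D(x,\la,\mu)$ as the building blocks.** The natural starting point is Cauchy's theorem applied to a suitable contour integral involving the Weyl solutions $\Phi(x,\la)$ and $\tilde\Phi(x,\la)$. Specifically, I would integrate a product of the form $S(x,\la)$ (or $\tilde S(x,\la)$) against $\tilde D(x,\la,\mu)$ or $D(x,\la,\mu)$ over a sequence of expanding contours $\Gamma_N$ enclosing the first $N$ groups of eigenvalues of both $L$ and $\tilde L$. Since $M(\la)$ and $\tilde M(\la)$ are meromorphic with simple poles at the respective eigenvalues and residues $-\al_{nk}$ (from \eqref{defal}), applying the residue theorem picks out precisely the weight matrices $\al'_{lj0}$ and $\al'_{lj1}$ appearing in the sums, with the sign difference between the $s=0$ and $s=1$ contributions coming from the two problems $L$ and $\tilde L$.

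\textbf{First I would record the key representation linking the two Weyl solutions.} The function $D(x,\la,\mu)$ in \eqref{defD} is a Wronskian-type kernel built so that, via the identity $\langle S^\dagger(x,\bar\la),S(x,\mu)\rangle' = (\la-\mu)S^\dagger(x,\bar\la)S(x,\mu)$ (which follows from the differential equation \eqref{eqv} and the Hermitian symmetry of $Q$), it reproduces an integral of the product of fundamental solutions. The combination $S(x,\la)M(\la)$ relates $S$ to the Weyl solution $\Phi$, and the difference $\Phi(x,\la)-\tilde\Phi(x,\la)$ is what the contour integral will expand. I would establish a "mapping" formula expressing $\tilde S_{nks}(x)$ as $S_{nks}(x)$ plus a contour integral of $S(x,\la)[M(\la)-\tilde M(\la)]\tilde D(x,\la,\la_{nks})$ around $\Gamma_N$. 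Evaluating this integral by residues at the poles of $M$ and $\tilde M$ yields exactly the double sum in \eqref{mainS}; identity \eqref{contD} arises analogously by instead starting from $\al'_{rq\eta}D(x,\la_{rq\eta},\la)$ inside the contour integral.

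\textbf{The main obstacle will be controlling the contour integrals as $N\to\infty$} — that is, proving that the integral over $\Gamma_N$ tends to zero (or to the claimed expression) and that the resulting series converges. This is where the eigenvalue asymptotics of Proposition~\ref{prop:asymptla} and the weight-matrix asymptotics \eqref{asymptal1}--\eqref{asymptal2}, together with the comparison estimates of Proposition~\ref{prop:asympt3}, are essential: the crude bound $\al_{nk}=O(n^2)$ from \eqref{estal} alone makes the series only conditionally controllable, so the cancellation between the $L$- and $\tilde L$-terms (guaranteed by $T=\tilde T$, $\Theta=\tilde\Theta$) must be exploited to get $l_2$-type summability of the grouped differences. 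I would need sharp asymptotic estimates for $D(x,\la_{lj},\la_{nks})$ and $\tilde D(x,\la_{lj},\la_{nks})$ as functions of $l$ and $n$, showing sufficiently fast decay off the diagonal $l=n$, to guarantee uniform convergence in $x\in[0,\pi]$ and justify interchanging summation with the limit. These technical estimates on $D$ and its asymptotics are exactly the auxiliary material the author defers to Section~4, so I would invoke them here and concentrate on assembling the contour-integration identity, leaving the convergence bookkeeping to the supporting lemmas.
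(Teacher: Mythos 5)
Your proposal follows essentially the same route as the paper: the author likewise derives the contour-integral representation $\tilde S(x,\la) = S(x,\la) + \frac{1}{2\pi i}\int_{\ga} S(x,\mu)\hat M(\mu)\tilde D(x,\mu,\la)\,d\mu$ (citing the standard arguments of Freiling--Yurko and Yurko), evaluates it by the Residue Theorem over truncated contours $\ga_N$ to obtain the series, substitutes $\la = \la_{nks}$, and treats \eqref{contD} analogously starting from $D(x,\la,\mu)$. The only minor overstatement is that for this lemma the paper needs only conditional convergence of the symmetrically truncated series (uniform on compacts), not the $l_2$-type grouped cancellation, which is reserved for the Banach-space reformulation later.
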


\begin{proof}
Repeating the standard arguments of the proofs of \cite[Lemma~1.6.3]{FY01} and \cite[Lemma~1]{Yur06}, we derive the relation
\begin{equation} \label{intS}
\tilde S(x, \la) = S(x, \la) + \frac{1}{2\pi i}\int_{\ga} S(x, \mu) \hat M(\mu) \tilde D(x, \mu, \la) \, d\mu, \quad \hat M(\mu) := M(\mu) - \tilde M(\mu),
\end{equation}
where $\ga$ is the boundary of the region $X := \{ \la \colon \mbox{Re}\, \la > -\de, \,| \mbox{Im}\,\la | < \de\}$ with the counter-clockwise circuit, $\de > 0$ is a fixed number.
Clearly, under our assumptions, all the eigenvalues $\{ \la_{nk} \}_{n \in \mathbb N,\, k = \overline{1, m}}$ and $\{ \tilde \la_{nk} \}_{n \in \mathbb N,\, k = \overline{1, m} }$ lie in $X$.
The integral in \eqref{intS} converges for $\la \in \mathbb C \backslash X$ in the sense $\int_{\ga} := \lim_{N \to \iy} \int_{\ga_N}$, $\ga_N := \{ \la \in \ga \colon |\la| \le (N + 1/4)^2 \}$. 
Calculating the integral by the Residue Theorem, we obtain the relation
$$
\tilde S(x, \la) = S(x, \la) + \sum_{l = 1}^{\iy} \sum_{j = 1}^m (S_{lj0}(x) \al'_{lj0} \tilde D(x, \la_{lj0}, \la) -
S_{lj1}(x) \al'_{lj1} \tilde D(x, \la_{lj1}, \la)),
$$
where the series converges uniformly with respect to $x \in [0, \pi]$ and $\la$ on compact sets. Substituting~$\la = \la_{nks}$, we arrive at~\eqref{mainS}.

Similarly, following the proofs of \cite[Lemma~1.6.3]{FY01} and \cite[Theorem~4]{Yur06}, we derive the relation
$$
\tilde D(x, \la, \mu) - D(x, \la, \mu) = \frac{1}{2 \pi i} \int_{\ga} D(x, \la, \xi) \hat M(\xi) \tilde D(x, \xi, \mu) \, d\xi, 
$$
which implies~\eqref{contD}.
\end{proof}

For each fixed $x \in [0, \pi]$, the relation \eqref{mainS} can be considered as a system of linear equations with respect 
to $S_{nks}(x)$, $n \in \mathbb N$, $k = \overline{1, m}$, $s = 0, 1$. 
But the series in \eqref{mainS} converge conditionally in the following sense: $\lim\limits_{N \to \iy} \sum\limits_{l = 1}^N \sum\limits_{j = 1}^m (\ldots)$,
so it is inconvenient to use~\eqref{mainS} as a system of main equations of the inverse problem.
Below we transform \eqref{mainS} into an equation in a specially constructed Banach space of bounded infinite sequences.

We divide the square roots of the eigenvalues $\{ \rho_{nks} \}$ into collections by asymptotics. Put 
\begin{equation} \label{defG}
    G_1 = \{ \rho_{nks} \}_{n = \overline{1, n_0}, \, k = \overline{1, m}, \, s = 0, 1}, \quad
    G_{2j} = \{ \rho_{n_0+j,ks} \}_{k = \overline{1, p}, \, s = 0, 1}, \quad
    G_{2j + 1} = \{ \rho_{n_0 + j,ks} \}_{k = \overline{p + 1, m}, \, s = 0, 1},
\end{equation}
where $j \in \mathbb N$, and an integer $n_0$ is chosen so that $G_n \cup G_k = \varnothing$ for $n \ne k$. Such $n_0$ exists because of 
the asymptotic relations~\eqref{asymptla}. Each collection $G_n$ may contain multiple elements.

Consider a collection $\mathcal G = \{ g_i \}_{i = 1}^r$ of (possibly multiple) complex numbers.
Denote by $B(\mathcal G)$ the finite-dimensional space of all the matrix functions $f \colon \mathcal G \to \mathbb C^{m \times m}$, such that
$g_i = g_j$ implies $f(g_i) = f(g_j)$, with the norm
\begin{equation} \label{normBG}
   \| f \|_{B(\mathcal G)} = \max \left\{ \max_{i = \overline{1, r}} \| f(g_i) \|, \max_{\substack{g_i \ne g_j \\ i,j = \overline{1, r}}}
   |g_i - g_j|^{-1} \| f(g_i) - f(g_j) \| \right\}.
\end{equation}	

Introduce the Banach space $B$ of infinite sequences:
\begin{equation} \label{defB}
    B = \{ f = \{ f_n \}_{n \ge 1} \colon f_n \in B(G_n), \: \| f \|_{B} := \sup_{n \ge 1} (n \| f_n \|_{B(G_n)}) < \iy \}.
\end{equation}

For $x \in [0, \pi]$, we define the sequence 
\begin{equation} \label{defpsi}
    \psi(x) = \{ \psi_n(x) \}_{n \ge 1}, \quad \psi_n(x) = \{ \psi_n(x, \rho_{ljs}) , \: (l, j, s) \colon \rho_{ljs} \in G_n \},
\end{equation}
where $\psi_n(x, \rho) = S(x, \rho^2)$ for all $\rho \in G_n$.
Note that $S(x, \rho^2) \sim \frac{\sin \rho x}{\rho} I_m$ as $|\rho| \to \iy$. Using Schwarz's Lemma similarly to \cite[Section~1.6.1]{FY01},
we get the estimates
\begin{equation} \label{estpsi}
    \| \psi_n(x, \rho) \| \le \frac{C}{n}, \quad \| \psi_n(x, \rho) - \psi_n(x, \theta) \| \le \frac{C}{n} |\rho - \theta|, \quad
    n \in \mathbb N, \quad \rho, \theta \in G_n. 
\end{equation}
Hence $\| \psi_n(x) \| \le \frac{C}{n}$ for $n \in \mathbb N$, so $\psi(x) \in B$ for each $x \in [0, \pi]$.

For each fixed $x \in [0, \pi]$, we define the linear operator $R(x) \colon B \to B$, acting on any element $f = \{ f_n \}_{n \ge 1} \in B$
in the following way:
\begin{gather} \label{defR}
    (f R(x))_n = \sum_{k = 1}^{\iy} f_k R_{k, n}(x), \quad R_{k, n}(x) \colon B(G_k) \to B(G_n), \\ \label{defRkn}
    (f_k R_{k, n}(x))(\rho) = \sum_{(l, j) \colon \rho_{lj0}, \rho_{lj1} \in G_k} (f_k (\rho_{lj0}) \al'_{lj0} D(x, \rho^2_{lj0}, \rho^2) - 
    f_k(\rho_{lj1}) \al'_{lj1} D(x, \rho_{lj1}^2, \rho^2)), \quad \rho \in G_n.
\end{gather}

Thus, the action of the operator $R(x)$ is a multiplication of an infinite row vector $f$ of $m \times m$ matrices
by the infinite matrix.
In~\eqref{defR} and~\eqref{defRkn}, we put operators to the right of their operands, in order to keep the correct order of matrix multiplication.

\begin{thm} \label{thm:R}
The series in \eqref{defR} converges in $B(G_n)$-norm. For each fixed $x \in [0, \pi]$, the operator $R(x)$ is bounded and, moreover, compact on $B$.
\end{thm}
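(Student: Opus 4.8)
The plan is to view $R(x)$ as an infinite block matrix $[R_{\mu,\nu}(x)]_{\mu,\nu\ge 1}$ acting between the finite-dimensional spaces $B(G_\mu)$ and $B(G_\nu)$, and to control the block operator norms $\|R_{\mu,\nu}(x)\|_{B(G_\mu)\to B(G_\nu)}$. Since $\|f_\mu\|_{B(G_\mu)}\le\mu^{-1}\|f\|_B$ by~\eqref{defB} and $\|fR(x)\|_B=\sup_\nu \nu\,\|(fR(x))_\nu\|_{B(G_\nu)}$, both the $B(G_\nu)$-convergence of the series in~\eqref{defR} and the boundedness of $R(x)$ follow at once from the single uniform estimate
\[
\sup_{\nu\ge 1}\ \nu\sum_{\mu\ge 1}\frac{1}{\mu}\,\|R_{\mu,\nu}(x)\|_{B(G_\mu)\to B(G_\nu)}<\iy .
\]
Thus the whole argument reduces to proving this inequality for each fixed $x\in[0,\pi]$.

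First I would record the kernel estimates for $D(x,\cdot,\cdot)$ from~\eqref{defD}. The Lagrange identity yields the representation $D(x,\la,\mu)=\int_0^x S^\dagger(t,\bar\la)\,S(t,\mu)\,dt$, so that, using the bounds~\eqref{estpsi} (whence $\|S(t,\rho^2)\|\le C\nu^{-1}$ for $\rho\in G_\nu$) together with the oscillation $S(t,\rho^2)\sim\rho^{-1}\sin(\rho t)I_m$, one obtains for $\rho_{ljs}\in G_\mu$ and $\rho\in G_\nu$ the Cauchy-kernel-type bound $\|D(x,\rho_{ljs}^2,\rho^2)\|\le C[\mu\nu(|\mu-\nu|+1)]^{-1}$, along with an analogous estimate for the $\la$-differences $D(x,\rho_{lj0}^2,\rho^2)-D(x,\rho_{lj1}^2,\rho^2)$; the detailed derivation I would defer to Section~4. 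The main obstacle is that, combined naively with the weight bound $\|\al'_{ljs}\|=O(\mu^2)$ for $\rho_{ljs}\in G_\mu$ (from~\eqref{estal}) and with the fact that each group supplies only $O(m)$ summands, this gives merely $\|R_{\mu,\nu}(x)\|\le C\mu[\nu(|\mu-\nu|+1)]^{-1}$, whence $\nu\sum_\mu\mu^{-1}\|R_{\mu,\nu}(x)\|\le C\sum_\mu(|\mu-\nu|+1)^{-1}$ diverges. Convergence must therefore come from cancellation \emph{inside} each block.

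The decisive step exploits the paired $(s=0)/(s=1)$ structure of~\eqref{defRkn}. Writing $D_i=D(x,\rho_{lji}^2,\rho^2)$, $i=0,1$, I would telescope each pair as
\begin{multline*}
f_\mu(\rho_{lj0})\,\al'_{lj0}D_0 - f_\mu(\rho_{lj1})\,\al'_{lj1}D_1
= \bigl(f_\mu(\rho_{lj0})-f_\mu(\rho_{lj1})\bigr)\al'_{lj0}D_0 \\
+ f_\mu(\rho_{lj1})\,(\al'_{lj0}-\al'_{lj1})\,D_0 + f_\mu(\rho_{lj1})\,\al'_{lj1}\,(D_0-D_1),
\end{multline*}
and extract smallness from three sources. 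The first term uses $|\rho_{lj0}-\rho_{lj1}|=|\varkappa_\mu|/\mu$ (Proposition~\ref{prop:asympt3}) and the difference-quotient part of the norm~\eqref{normBG}; the third uses the same closeness and the $\la$-smoothness of $D$. The middle term is the genuinely delicate one, since individually $\|\al'_{lj0}-\al'_{lj1}\|$ need not be small: here I would sum over the column index $j$ within the group first, replacing $f_\mu(\rho_{lj1})$ and $D_0$ by their values at a common node (the resulting errors being absorbed into the first and third mechanisms), so that the weight differences assemble into the group sums $\al_l^I-\tilde\al_l^I$, $\al_l^{II}-\tilde\al_l^{II}$, $\al_l^{(s)}-\tilde\al_l^{(s)}$, which by Proposition~\ref{prop:asympt3} equal $lK_l$, $lK_l$ and $l^2K_l$. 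In every case the gained factor is an entry of an $l_2$ sequence, so the block bound improves to $\|R_{\mu,\nu}(x)\|\le C\mu\,\eta_\mu[\nu(|\mu-\nu|+1)]^{-1}$ with $\{\eta_\mu\}\in l_2$. The required estimate then follows from the Cauchy–Schwarz inequality, because
\[
\nu\sum_{\mu\ge 1}\frac{1}{\mu}\,\|R_{\mu,\nu}(x)\|\le C\sum_{\mu\ge 1}\frac{\eta_\mu}{|\mu-\nu|+1}\le C\|\eta\|_{l_2}\Bigl(\sum_{\mu\ge 1}\frac{1}{(|\mu-\nu|+1)^2}\Bigr)^{1/2},
\]
and the last sum is bounded uniformly in $\nu$. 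This proves the convergence of~\eqref{defR} and the boundedness of $R(x)$.

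Finally, for compactness I would approximate $R(x)$ in operator norm by the truncations $R^{(N)}(x)$ retaining only the blocks with $\mu,\nu\le N$. Each $R^{(N)}(x)$ has finite rank, as it factors through the finite-dimensional space $\bigoplus_{\nu\le N}B(G_\nu)$ and depends only on $\{f_\mu\}_{\mu\le N}$. The sharpened block bound controls the tail: for $\nu>N$ one has $\nu\sum_\mu\mu^{-1}\|R_{\mu,\nu}(x)\|\le C\sum_\mu\eta_\mu(|\mu-\nu|+1)^{-1}\to 0$ as $\nu\to\iy$, while for $\nu\le N$ the remaining blocks satisfy $\nu\sum_{\mu>N}\mu^{-1}\|R_{\mu,\nu}(x)\|\le C\|\{\eta_\mu\}_{\mu>N}\|_{l_2}\to 0$; hence $\|R(x)-R^{(N)}(x)\|_{B\to B}\to 0$, and $R(x)$ is a norm-limit of finite-rank operators, thus compact. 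The hardest part throughout is the quantitative cancellation in the middle telescoping term, where the $O(\mu^2)$ growth of the individual weight matrices is tamed only after regrouping into the group sums governed by Proposition~\ref{prop:asympt3}.
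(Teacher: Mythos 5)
Your proposal is correct and follows essentially the same route as the paper: the telescoping of each $s=0$/$s=1$ pair into the three cancellation mechanisms, with the weight-difference term tamed by two-level regrouping into the sums controlled by Proposition~\ref{prop:asympt3}, is precisely the $J_1$--$J_9$ decomposition of the paper's Lemma~\ref{lem:estRkn}, and compactness is likewise obtained by norm-approximating $R(x)$ with finite-rank block truncations. The only cosmetic differences are that the paper truncates only the input index $k$ and sums the block bounds using $\sum_k \xi_k < \iy$ instead of Cauchy--Schwarz against the kernel $(|\mu-\nu|+1)^{-1}$.
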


The proof of Theorem~\ref{thm:R} is provided in Section~4.

Define the element $\tilde \psi(x) \in B$ and the operator $\tilde R(x) \colon B \to B$ similarly to $\psi(x)$ and $R(x)$, respectively, 
with $\tilde S$, $\tilde D$ instead of $S$, $D$. Obviously, the relation~\eqref{mainS} can be rewritten
in the form
\begin{equation} \label{main}
  \tilde \psi(x) = \psi(x) (\mathcal I + \tilde R(x)),	
\end{equation}
where $\mathcal I$ is the identity operator in $B$. Clearly, the assertion of Theorem~\ref{thm:R} is valid for $\tilde R(x)$, i.e. for each fixed $x \in [0, \pi]$, the linear operator $\tilde R(x)$ is compact on $B$. We call equation~\eqref{main} in the Banach space $B$ {\it the main equation} of Inverse Problem~\ref{ip:1}.

\begin{thm} \label{thm:solve}
For each fixed $x \in [0, \pi]$, the main equation~\eqref{main} is uniquely solvable in the Banach space $B$.
\end{thm}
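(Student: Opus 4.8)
The plan is to read \eqref{main} as the linear equation $\tilde\psi(x) = \psi(x)(\mathcal I + \tilde R(x))$ in the Banach space $B$ and to prove that, for each fixed $x \in [0,\pi]$, the operator $\mathcal I + \tilde R(x)$ is boundedly invertible. By Theorem~\ref{thm:R} the operator $\tilde R(x)$ is compact on $B$, so $\mathcal I + \tilde R(x)$ is Fredholm of index zero; consequently it is invertible as soon as it is injective, i.e.\ as soon as the homogeneous equation $\beta(x)(\mathcal I + \tilde R(x)) = 0$ has in $B$ only the trivial solution $\beta(x) = 0$. Existence of \emph{a} solution of \eqref{main} is automatic, since the sequence $\psi(x)$ assembled from the solution $S(x,\la)$ of the underlying problem $L$ lies in $B$ by~\eqref{estpsi} and satisfies~\eqref{mainS}; thus the whole issue is the triviality of the kernel.

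The key step is to convert the second relation of Lemma~\ref{lem:contour}, equation~\eqref{contD}, into an operator identity on $B$. Reading the entries of $R(x)$ and $\tilde R(x)$ from~\eqref{defR}--\eqref{defRkn} (with the weight $\al'_{ljs}$ and the sign $(-1)^s$ attached to the summation index $(l,j,s)$), one recognizes the right-hand side of~\eqref{contD} as the composition $R(x)\tilde R(x)$ and its left-hand side as $\tilde R(x) - R(x)$, so that~\eqref{contD} reads
\[
\tilde R(x) - R(x) = R(x)\tilde R(x).
\]
Interchanging the roles of $L$ and $\tilde L$ in the derivation of Lemma~\ref{lem:contour} (which only replaces $\hat M$ by $-\hat M$ and swaps $D \leftrightarrow \tilde D$) gives the companion identity
\[
\tilde R(x) - R(x) = \tilde R(x)R(x),
\]
where $R(x)$ is the operator built via~\eqref{defR}--\eqref{defRkn} from the a priori unknown but existing problem $L$; by Theorem~\ref{thm:R} it too is bounded and compact on $B$.

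Combining the two identities yields
\[
(\mathcal I - R(x))(\mathcal I + \tilde R(x)) = \mathcal I = (\mathcal I + \tilde R(x))(\mathcal I - R(x)),
\]
exhibiting $\mathcal I - R(x)$ as a bounded two-sided inverse of $\mathcal I + \tilde R(x)$. In particular the kernel is trivial, which already suffices by the Fredholm alternative; moreover \eqref{main} has for each fixed $x$ the unique solution $\psi(x) = \tilde\psi(x)(\mathcal I - R(x))$. Specializing to $\tilde L = L$ and comparing solutions then also gives Proposition~\ref{prop:uniq}.

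The main obstacle I anticipate is not the algebra above but the rigorous passage from~\eqref{contD} to a genuine identity of \emph{bounded operators on} $B$. The series in~\eqref{mainS} and~\eqref{contD} converge only conditionally in their original ungrouped form, so the rearrangement that identifies the right-hand side of~\eqref{contD} with the product $R(x)\tilde R(x)$, together with the interchange of the order of summation over the intermediate index $(l,j)$, must be justified inside $B$. This is precisely where the grouping of the eigenvalues into the blocks $G_n$ of~\eqref{defG}, the block norm~\eqref{normBG}, and the convergence estimates underlying Theorem~\ref{thm:R} enter: the technical lemmas of Section~4 provide the absolute, $B(G_n)$-norm convergence that legitimizes treating $R(x)$ and $\tilde R(x)$ as honest compact operators whose composition obeys~\eqref{contD}, after which the inversion argument is immediate.
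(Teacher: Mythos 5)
Your proposal is correct and follows essentially the same route as the paper: the identity~\eqref{contD} is converted into $(\mathcal I - R(x))(\mathcal I + \tilde R(x)) = \mathcal I$ and its symmetric counterpart, so that $\mathcal I - R(x)$ is a bounded two-sided inverse of $\mathcal I + \tilde R(x)$, with boundedness supplied by Theorem~\ref{thm:R}. The Fredholm-alternative framing you add is a harmless detour, since the explicit inverse already settles both injectivity and surjectivity.
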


\begin{proof}
Using~\eqref{contD}, \eqref{defR} and~\eqref{defRkn}, we derive the relation
$$
(\mathcal I - R(x)) (\mathcal I + \tilde R(x)) = \mathcal I, \quad x \in [0, \pi].
$$
Symmetrically, one can obtain that
$$
(\mathcal I + \tilde R(x)) (\mathcal I - R(x)) = \mathcal I, \quad x \in [0, \pi].
$$
Therefore the operator $(\mathcal I + \tilde R(x))^{-1}$ exists and equals $(\mathcal I - R(x))$. By virtue of Theorem~\ref{thm:R}, the latter operator is bounded, so equation~\eqref{main} is uniquely solvable.
\end{proof}

By using the solution $\psi(x)$ of the main equation~\eqref{main}, one can construct the solution of Inverse Problem~\ref{ip:1}. Indeed, recall the definition~\eqref{defpsi} of $\psi(x)$. The known $\psi(x)$, in fact, gives us the sequence of the matrix functions $\{ S_{nks}(x) \}_{n \in \mathbb N, \, k = \overline{1,m}, \, s = 0, 1}$. These matrix functions satisfy the equation~\eqref{eqv} for $\la = \la_{nks}$, so one can construct the potential matrix by the formula:
$$
Q(x) = S''_{nks}(x) S^{-1}_{nks}(x) + \la_{nks} I_m.
$$
Then one can find $\Omega$ by \eqref{defOmega} and determine $H$ from~\eqref{relAs}:
$$
H = T \Omega T + T^{\perp} \Omega T^{\perp} - \Theta,
$$
since the matrices $\Theta$ and $T$ are already known (see Algorithm~\ref{alg:model}).

Below we describe another way to find $Q(x)$ and $H$. The following method is more convenient for further investigation of Inverse Problem~\ref{ip:1}, in particular, for characterization of the spectral data for the problem $L$.

Define the matrix functions
\begin{equation} \label{defeps}
    \eps_0(x) = \sum_{n = 1}^{\iy} \sum_{k = 1}^m (S_{nk0}(x) \al'_{nk0} \tilde S_{nk0}^{\dagger}(x) - S_{nk1}(x) \al'_{nk1} \tilde S_{nk1}^{\dagger}(x)), \quad
    \eps(x) = -2 \eps'_0(x).
\end{equation}

\begin{lem} \label{lem:eps}
The series \eqref{defeps} converges uniformly with respect to $x \in [0, \pi]$. Moreover, the matrix function $\eps_0(x)$ is absolutely continuous on $[0, \pi]$ and the elements of $\eps(x)$ belong to $L_2(0, \pi)$. 
\end{lem}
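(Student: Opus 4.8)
The plan is to prove convergence not termwise but after grouping the summands by asymptotics, exploiting the cancellations between the actual problem ($s=0$) and the model problem ($s=1$) recorded in Proposition~\ref{prop:asympt3}. The point to keep in mind is that an individual summand does \emph{not} decay: by~\eqref{estpsi} one has $\|S(x,\rho^2)\|=O(n^{-1})$ on $G_n$, while $\|\al_{nk}'\|=O(n^2)$ by~\eqref{estal}, so each term is only $O(1)$. This non-decay is the main obstacle, and all the work consists in extracting smallness from differences. Throughout I fix $x\in[0,\pi]$ and treat the blocks $\sum_{k=1}^m(\dots)$ (the inner finite sum) as the basic units, matching the order of summation in~\eqref{defeps}.

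First I would split the $k$-th summand as
\begin{multline*}
S_{nk0}\al'_{nk0}\tilde S_{nk0}^{\dagger}-S_{nk1}\al'_{nk1}\tilde S_{nk1}^{\dagger}=(S_{nk0}-S_{nk1})\al'_{nk0}\tilde S_{nk0}^{\dagger}\\
+S_{nk1}\al'_{nk0}(\tilde S_{nk0}^{\dagger}-\tilde S_{nk1}^{\dagger})+S_{nk1}(\al'_{nk0}-\al'_{nk1})\tilde S_{nk1}^{\dagger}.
\end{multline*}
For the first two families the factor $\|S_{nk0}-S_{nk1}\|$ (resp.\ $\|\tilde S_{nk0}^{\dagger}-\tilde S_{nk1}^{\dagger}\|$) is controlled by the Lipschitz estimate~\eqref{estpsi} together with $\rho_{nk}-\tilde\rho_{nk}=\varkappa_n/n$ from Proposition~\ref{prop:asympt3}, which gives a bound $C|\varkappa_n|/n$ for each; since $\{\varkappa_n\}\in l_2$, these contributions sum absolutely and uniformly in $x$ by Cauchy--Schwarz. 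The third, weight-difference family is the genuinely hard one, since $\|\al'_{nk0}-\al'_{nk1}\|$ carries no smallness of its own.

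To treat the weight-difference terms I would sum over $k$ before estimating, in two stages. Summing over a fine group $\{k:z_k=z_s\}$ and replacing the slowly varying factors $S_{nk1},\tilde S_{nk1}^{\dagger}$ by a common representative value $\sigma_n^{(s)}$ (the error being $O(|\varkappa_n|/n)$, again summable, because within a fine group the $\tilde\rho_{nk}$ differ only by $O(|\varkappa_n|/n)$) reduces the main part to $\sigma_n^{(s)}(\al_n^{(s)}-\tilde\al_n^{(s)})(\sigma_n^{(s)})^{\dagger}$, where $\al_n^{(s)}-\tilde\al_n^{(s)}=n^2K_n$. This is only $O(\|K_n\|)$, hence merely $l_2$-summable, so I would then sum these fine main terms over the fine groups inside one coarse group ($k=\overline{1,p}$ or $k=\overline{p+1,m}$) and replace the fine representatives by a single coarse representative $\sigma_n$; their frequencies differ by $O(n^{-1})$, producing an error $O(\|K_n\|/n)$. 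Since the fine sums telescope, $\sum_s(\al_n^{(s)}-\tilde\al_n^{(s)})=\al_n^{I}-\tilde\al_n^{I}=nK_n$, the combined main term becomes $\sigma_n(\al_n^{I}-\tilde\al_n^{I})\sigma_n^{\dagger}=O(\|K_n\|/n)$, and likewise with $\al_n^{II}$. The decisive gain is the extra power of $n$ obtained in passing from $\al_n^{(s)}$ to $\al_n^{I,II}$, which upgrades summability from $l_2$ to $l_1$; the two-stage order matters, since replacing frequencies directly on the ungrouped $O(n^2)$ weights would leave an unsummable $O(n^{-1})$ error. Collecting everything, the $n$-th block is bounded by $C(|\varkappa_n|+\|K_n\|)/n$ uniformly in $x$, whence~\eqref{defeps} converges absolutely and uniformly on $[0,\pi]$.

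For the regularity claim I would differentiate~\eqref{defeps} termwise and rerun the same grouping, now tracking the oscillatory factors. Each $x$-derivative lowers a power of $\rho\sim n$, since $\tfrac{d}{dx}S(x,\rho^2)\sim\cos\rho x$, so the block derivatives are only $O(|\varkappa_n|+\|K_n\|)$, i.e.\ $l_2$ but not $l_1$; however they appear against products of $\sin$ and $\cos$ of $\rho_{nk}x$, which I would expand via the transformation-operator representation $S(x,\rho^2)=\rho^{-1}\sin\rho x\,I_m+\int_0^x\mathcal K(x,t)\,\rho^{-1}\sin\rho t\,dt$. Because the systems $\{\cos\rho_{nk}x\}$ and $\{\rho_{nk}^{-1}\sin\rho_{nk}x\}$ are Bessel systems in $L_2(0,\pi)$, as in \cite[Section~1.6]{FY01}, a series of such functions with $l_2$ matrix coefficients converges in $L_2(0,\pi)$; this gives $L_2$-convergence of the differentiated series. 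Combined with the uniform convergence of~\eqref{defeps}, it yields that $\eps_0$ is absolutely continuous and that $\eps=-2\eps_0'$ has entries in $L_2(0,\pi)$. The hard part throughout is the $O(n^2)$ growth of the weight matrices, defeated only by the layered grouping together with the hierarchy of cancellations $\rho_{nk}-\tilde\rho_{nk}=O(n^{-1})$, $\al_n^{(s)}-\tilde\al_n^{(s)}=O(n^2)K_n$, and $\al_n^{I,II}-\tilde\al_n^{I,II}=O(n)K_n$ supplied by Proposition~\ref{prop:asympt3}.
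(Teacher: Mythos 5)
Your proposal is correct and follows essentially the same route as the paper: the three-way splitting of each summand, the two-stage grouping of the weight-difference terms (fine groups by equal $z_s$ with representatives, then telescoping to $\al_n^{I},\al_n^{II}$ within a coarse group) is exactly the paper's decomposition \eqref{longE} built on the partition $G_k=\cup_i G_{ki}$ of Lemma~\ref{lem:partG}, and your final block bound $C(|\varkappa_n|+\|K_n\|)/n$ is the paper's $C\xi_k$. The only cosmetic difference is in the $L_2$ step, where the paper extracts the explicit oscillatory main term $\Gamma_k x\sin(2n_kx)+O(\xi_k)$ with $\{\|\Gamma_k\|\}\in l_2$ rather than invoking Bessel systems directly, but the substance is the same.
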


\begin{thm} \label{thm:findQH}
The following relations hold
\begin{equation} \label{findQH}
Q(x) = \tilde Q(x) + \eps(x), \quad H = \tilde H - T \eps_0(\pi) T.
\end{equation}
\end{thm}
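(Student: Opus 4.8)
The plan is to prove the two assertions of~\eqref{findQH} in turn, establishing the potential formula $Q = \tilde Q + \eps$ first and then deducing the formula for $H$ from it by integration. For the potential I would follow the standard scheme of the method of spectral mappings: start from the integral representation~\eqref{intS}, namely $\tilde S(x,\la) = S(x,\la) + \frac{1}{2\pi i}\int_{\ga} S(x,\mu)\hat M(\mu)\tilde D(x,\mu,\la)\,d\mu$, and differentiate it twice with respect to $x$, using the differential equation~\eqref{eqv} to make the factor $Q(x) - \tilde Q(x)$ appear explicitly.

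Two auxiliary identities drive this computation. First, a direct calculation with the Wronskian in~\eqref{defD} (using that $\tilde Q$ is Hermitian) gives $\partial_x \tilde D(x,\mu,\la) = \tilde S^{\dagger}(x,\bar\mu)\tilde S(x,\la)$ together with $(\mu-\la)\tilde D(x,\mu,\la) = \langle \tilde S^{\dagger}(x,\bar\mu),\tilde S(x,\la)\rangle$. The former produces, upon each differentiation of the integral, a term whose $\mu$-integral is recognized through~\eqref{defeps} and~\eqref{defal} as $\eps_0(x)$, by the Residue Theorem in the form $\frac{1}{2\pi i}\int_{\ga} S\hat M\tilde S^{\dagger}\,d\mu = \eps_0(x)$; the latter lets me replace $\mu\tilde D$ by $\la\tilde D + \langle \tilde S^{\dagger},\tilde S\rangle$ after the second derivative brings down $S''(x,\mu) = (Q(x)-\mu)S(x,\mu)$. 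Collecting everything, and using $\frac{1}{2\pi i}\int_{\ga} S\hat M\tilde D\,d\mu = \tilde S - S$ (which is~\eqref{intS} itself) together with $\eps_0' = \frac{1}{2\pi i}\int_{\ga}(S'\hat M\tilde S^{\dagger} + S\hat M(\tilde S^{\dagger})')\,d\mu$, all terms carrying a factor $S$ and all terms carrying a factor $\tilde S'$ cancel, leaving $(\tilde Q(x) - Q(x) - 2\eps_0'(x))\tilde S(x,\la) = 0$. Since $\det \tilde S(x,\la)\not\equiv 0$ in $\la$ for fixed $x$, I may cancel $\tilde S(x,\la)$ and obtain $Q - \tilde Q = -2\eps_0' = \eps$, which is the first relation in~\eqref{findQH}.

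For the second relation I would integrate $Q - \tilde Q = \eps = -2\eps_0'$ over $(0,\pi)$. By~\eqref{defOmega} this yields $\Omega - \tilde\Omega = \frac{1}{2}\int_0^{\pi}\eps\,dx = -(\eps_0(\pi) - \eps_0(0))$, and $\eps_0(0) = 0$ because $S(0,\la) = 0$ makes every summand in~\eqref{defeps} vanish at $x = 0$; hence $\Omega - \tilde\Omega = -\eps_0(\pi)$. Applying the formula $H = T\Omega T + T^{\perp}\Omega T^{\perp} - \Theta$ coming from~\eqref{relAs} to both $L$ and $\tilde L$ and subtracting, while using $\Theta = \tilde\Theta$ (guaranteed by Algorithm~\ref{alg:model}), gives $H - \tilde H = T(\Omega - \tilde\Omega)T + T^{\perp}(\Omega - \tilde\Omega)T^{\perp}$. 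But $\Theta = \tilde\Theta$ also forces equality of the $T^{\perp}$-blocks, $T^{\perp}\Omega T^{\perp} = T^{\perp}\tilde\Omega T^{\perp}$ (these blocks receive no contribution from $H$), so the second summand vanishes and $H - \tilde H = T(\Omega - \tilde\Omega)T = -T\eps_0(\pi)T$, as claimed.

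The main obstacle is analytic rather than algebraic: the double differentiation under the integral (equivalently, termwise in the series~\eqref{mainS}) must be justified, since those representations converge only conditionally and $\eps$ lies merely in $L_2(0,\pi)$. Here I would rely on Lemma~\ref{lem:eps}, which ensures that $\eps_0$ is absolutely continuous and $\eps = -2\eps_0'\in L_2(0,\pi)$, so that all the manipulations above are legitimate and the identity $Q = \tilde Q + \eps$ holds almost everywhere. A secondary point requiring care is the sign $\frac{1}{2\pi i}\int_{\ga} S\hat M\tilde S^{\dagger}\,d\mu = +\eps_0(x)$: it is fixed by the contour-orientation convention of Lemma~\ref{lem:contour} and must be kept consistent with the sign in~\eqref{mainS} throughout, as it is precisely what produces $+\eps$ rather than $-\eps$ in~\eqref{findQH}.
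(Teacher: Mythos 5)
Your derivation of $Q = \tilde Q + \eps$ coincides with the paper's Step~1: double differentiation of~\eqref{intS}, substitution of the second derivatives from~\eqref{eqv}, identification of $\frac{1}{2\pi i}\int_{\ga} S\hat M \tilde S^{\dagger}\,d\mu$ with $\eps_0(x)$ by the Residue Theorem, and cancellation of $\tilde S(x,\la)$. For the formula $H = \tilde H - T\eps_0(\pi)T$, however, you take a genuinely different route. The paper works with the Weyl solution: it derives the analogue~\eqref{intPhi} of~\eqref{intS} for $\Phi$, applies the boundary form $V$, shows that one of the resulting contour integrals vanishes because $V(S(x,\mu))M(\mu)$ and $\tilde M(\mu)\tilde V^{\dagger}(\tilde S^{\dagger}(x,\bar\mu))$ are entire, arrives at $T(\tilde H - H - \eps_0(\pi))\tilde\Phi(\pi,\la) = 0$, and removes $\tilde\Phi(\pi,\la)$ using the asymptotics $\tilde\Phi(\pi,-\tau^2) = 2T\exp(-\tau\pi)(1+O(\tau^{-1}))$. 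You instead integrate the already-proved identity $Q - \tilde Q = -2\eps_0'$ over $(0,\pi)$, use $\eps_0(0)=0$ (valid, since $S(0,\la)=0$) to get $\Omega - \tilde\Omega = -\eps_0(\pi)$, and then invoke~\eqref{relAs} for both problems together with $\Theta = \tilde\Theta$; your observation that $T^{\perp}\Theta T^{\perp} = T^{\perp}\Omega T^{\perp}$ kills the $T^{\perp}$-block and leaves $H - \tilde H = T(\Omega-\tilde\Omega)T$. This is correct and noticeably shorter — indeed the paper itself sketches this alternative right after Theorem~\ref{thm:solve} as a way to recover $H$ in the algorithm, though it does not use it in the proof. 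What the paper's longer argument buys is independence from the external asymptotic identity~\eqref{relAs} (a cited result from \cite{Bond19}) and a derivation that reads $H$ directly off the boundary condition at $x=\pi$; that form of the argument is the one that survives in the sufficiency/characterization direction, where one constructs $L$ from data and cannot yet assume that~\eqref{relAs} holds for it. For the theorem as stated, where $L$ is a genuine problem of the form~\eqref{eqv}--\eqref{bc}, your shortcut is legitimate.
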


The proofs of Lemma~\ref{lem:eps} and Theorem~\ref{thm:findQH} are provided in Section~4.
Finally, we arrive at the following algorithm for solving Inverse Problem~\ref{ip:1}.

\begin{alg} \label{alg:ip}
Let the spectral data $\{ \la_{nk}, \al_{nk} \}_{n \in \mathbb N, \, k = \overline{1, m}}$ be given. We have to construct $Q(x)$ and $H$.

\begin{enumerate}
\item Construct the model problem $\tilde L$, using Algorithm~\ref{alg:model}. At this step, we also determine the matrices $T$ and $\Theta$.
\item Find the matrix functions $\{ \tilde S_{nks}(x) \}_{n \in \mathbb N,\, k = \overline{1,m}, \, s = 0, 1}$ as the solutions of the initial value problems for equation~\eqref{eqv} with the potential $\tilde Q(x)$ and $\la = \la_{nks}$.
\item Using~\eqref{defD}, construct the functions $\tilde D(x, \la_{lj\eta}, \la_{nks})$ for $n, l \in \mathbb N$, $j, k = \overline{1, m}$, $\eta, s = 0, 1$.
\item Form the collections $\{ G_k \}_{k \ge 1}$ by \eqref{defG}.
\item Using the matrix functions $\tilde S_{nks}(x)$ and $\tilde D(x, \la_{lj\eta}, \la_{nks})$, form the element $\tilde \psi(x) \in B$ and the operator $\tilde R(x) \colon B \to B$ (see \eqref{defpsi}, \eqref{defR} and~\eqref{defRkn}).
\item Solve the main equation~\eqref{main} and find $\psi(x) \in B$, i.e. obtain $\{ S_{nks}(x) \}_{n \in \mathbb N, \, k = \overline{1, m}, \, s = 0, 1}$.
\item Construct $\eps_0(x)$ and $\eps(x)$ by \eqref{defeps}.
\item Find $Q(x)$ and $H$ by the formulas~\eqref{findQH}.
\end{enumerate}
\end{alg}

Algorithm~\ref{alg:ip} is theoretical. Relying on this algorithm, one can develop a numerical technique for solving Inverse 
Problem~\ref{ip:1}. For the scalar Sturm-Liouville equation ($m = 1$), the numerical algorithm, based on the method of spectral 
mappings, is provided in \cite{IY08}. Similarly one can obtain a numerical method for the matrix case, but this issue requires an additional investigation. In this paper, we illustrate the work of Algorithm~\ref{alg:ip} by a simple finite-dimensional example in Section~6. 

\section{Proofs}

In this section, the proofs of the assertions from Section~3 are provided. Our methods develop the approach of \cite{Bond16, Bond19-tamkang}
and are based on the grouping \eqref{defG} of the eigenvalues by their asymptotics.

\begin{lem} \label{lem:partG}
For collections $G_k$, $k \ge 1$, there are partitions into smaller collections
\begin{equation} \label{partGk}
    G_k = \cup_{i = 1}^{p_k} G_{ki}, \quad G_{ki} \cap G_{kj} = \varnothing, \quad i \ne j,
\end{equation}
such that
\begin{gather} \label{sumxi}
\sum_{k = 1}^{\iy} (k \xi_k)^2 < \iy, \\ \label{defxi}
\xi_k := \sum_{i = 1}^{p_k} \sum_{\rho, \theta \in G_{ki}} |\rho - \theta| + \frac{1}{k^3} \sum_{i = 1}^{p_k} \| \al(G_{ki}) - \tilde \al (G_{ki}) \| + \frac{1}{k^2} \| \al(G_k) - \tilde \al(G_k) \|.
\end{gather}
Here
$$
\al(\mathcal G) := \sum_{(l, j) \colon \rho_{lj0} \in \mathcal G} \al'_{lj0}, \quad \tilde \al(\mathcal G) := \sum_{(l, j) \colon \rho_{lj1} \in \mathcal G} \al'_{lj1},
$$
for any collection $\mathcal G$ of the form, described in Section~3.
\end{lem}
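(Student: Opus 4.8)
The plan is to define the refinement \eqref{partGk} by grouping together precisely those square roots that share a common limiting coefficient $z_{k'}$, and then to estimate the three summands of $\xi_k$ in \eqref{defxi} separately against the asymptotics of Propositions~\ref{prop:asymptla} and~\ref{prop:asympt3}. The finite collection $G_1$ is treated trivially: $\xi_1$ is a finite number for any partition, so it contributes a single finite term to \eqref{sumxi}. For $k \ge 2$, each $G_k$ consists of the square roots $\rho_{n,k',s}$ with a single $n = n_0 + \lfloor k/2 \rfloor$ and with $k'$ running over one of the ranges $\overline{1,p}$ or $\overline{p+1,m}$; I partition $G_k = \cup_i G_{ki}$ by placing $\rho_{n,k',0}$ and $\rho_{n,k',1}$ into the same block $G_{ki}$ whenever the index $k'$ gives a fixed value of $z_{k'}$. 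The number $p_k$ of blocks is thus at most $m$.

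This choice is legitimate because the coefficients $z_{k'}$ coincide index-by-index for $L$ and the model problem $\tilde L$. Indeed, $\tilde T = T$ and $\tilde\Theta = \Theta$ together with $\tilde H = 0$ give $\tilde T(\tilde\Omega - \tilde H)\tilde T = T\Theta T = T(\Omega - H)T$, so the first-group coefficients agree; for the second group $T^{\perp} H T^{\perp} = 0$ (from $H = THT$) forces $z_{k'} = 0$, $k' = \overline{p+1,m}$, for both problems. Consequently, within a single block $G_{ki}$ all leading terms $n - \tfrac12$ (resp. $n$) and all $z$-terms in \eqref{asymptla} cancel pairwise, and every difference $\rho - \theta$ reduces to a difference of the $O(\varkappa_n/n)$ remainders. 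Since each $G_k$ has boundedly many pairs, this yields $\sum_i \sum_{\rho, \theta \in G_{ki}} |\rho - \theta| \le C\varkappa_n/n$ for a new $l_2$ sequence $\{\varkappa_n\}$.

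For the remaining two summands I would identify the weight sums with the aggregated quantities of Section~2. By the definitions of $\al(\cdot)$ and of $\al_n^{(s)}$, $\al_n^{I}$, $\al_n^{II}$, one has $\al(G_{ki}) = \al_n^{(s)}$ and $\tilde\al(G_{ki}) = \tilde\al_n^{(s)}$ for the distinct value $z_s$ determining $G_{ki}$, while $\al(G_k)$ equals $\al_n^{I}$ or $\al_n^{II}$ according to the parity of $k$, and likewise for the tilded sums. Proposition~\ref{prop:asympt3} then gives $\|\al(G_{ki}) - \tilde\al(G_{ki})\| \le n^2\|K_n\|$ and $\|\al(G_k) - \tilde\al(G_k)\| \le n\|K_n\|$ with $\{\|K_n\|\} \in l_2$.

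It remains to combine the three bounds using $k \sim 2n$, so that $k/n$ stays bounded away from $0$ and $\infty$. Since $p_k \le m$, this gives $k\,\xi_k \le C\varkappa_n + C\tfrac{n^2}{k^2}\|K_n\| + C\tfrac{n}{k}\|K_n\| \le C(\varkappa_n + \|K_n\|)$, which is an $l_2$ sequence; squaring and summing proves \eqref{sumxi}. I expect the main obstacle to be the bookkeeping in the third paragraph: one must check carefully that the sums $\al(G_{ki})$ and $\al(G_k)$, defined through the membership conditions $\rho_{lj0} \in G_{ki}$ (resp. $\rho_{lj0} \in G_k$) and their tilded analogues with $\rho_{lj1}$, coincide exactly with the aggregated matrices $\al_n^{(s)}$, $\al_n^{I}$, $\al_n^{II}$ and their models, and that the original and model square roots carrying a common index $k'$ really fall into one block, so that Proposition~\ref{prop:asympt3} is applicable.
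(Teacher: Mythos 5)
Your proposal is correct and follows essentially the same route as the paper: the paper's own proof is a three-line remark that takes the trivial partition for $G_1$, partitions each $G_k$ with $k>1$ into blocks of values sharing a common coefficient $z_s$ in \eqref{asymptla}, and declares that \eqref{sumxi} then follows from Propositions~\ref{prop:asymptla} and~\ref{prop:asympt3}. You simply supply the bookkeeping the paper leaves implicit --- the coincidence of the $z$'s for $L$ and $\tilde L$ via $T=\tilde T$, $\Theta=\tilde\Theta$, the identification of $\al(G_{ki})$ and $\al(G_k)$ with $\al_n^{(s)}$ and $\al_n^{I}$, $\al_n^{II}$, and the $k\sim 2n$ comparison --- and all of it checks out.
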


\begin{proof}
The assertion of the lemma immediately follows from Propositions~\ref{prop:asymptla} and~\ref{prop:asympt3}.
For $k = 1$, the partition is trivial: $p_1 = 1$, $G_{11} = G_1$.
For $k > 1$, each collection $G_{ki}$ is composed of the values with equal coefficients $z_s$ in the asymptotics~\eqref{asymptla}.
\end{proof}

\begin{lem} \label{lem:Schwarz}
For $n, k \in \mathbb N$, $\rho \in G_n$, $\theta, \chi \in G_k$, $x \in [0, \pi]$, the following estimates hold:
\begin{equation*}
    \| D(x, \theta^2, \rho^2) \| \le \frac{C}{nk (|n - k| + 1)}, \quad
    \| D(x, \theta^2, \rho^2) - D(x, \chi^2, \rho^2) \| \le \frac{C |\theta - \chi|}{nk (|n - k| + 1)},
\end{equation*}
where the constant $C$ does not depend on $n$, $k$, $\rho$, $\theta$, $\chi$ and $x$.
\end{lem}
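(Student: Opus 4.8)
The plan is to first remove the apparent singularity in the definition~\eqref{defD} of $D$ by passing to an integral representation, and then to extract the decay in $|n-k|$ from the oscillation of the solutions. Via the Lagrange identity for the matrix Wronskian $\langle Z,Y\rangle=ZY'-Z'Y$, and using $Q=Q^{\dagger}$ so that $S^{\dagger}(t,\bar\la)$ solves the transposed equation with spectral parameter $\la$, I would first establish
$$
\frac{d}{dt}\langle S^{\dagger}(t,\bar\la),S(t,\mu)\rangle=(\la-\mu)\,S^{\dagger}(t,\bar\la)\,S(t,\mu).
$$
Integrating from $0$ to $x$ and invoking the initial data $S(0,\cdot)=0$, $S'(0,\cdot)=I_m$ (so the boundary term at $0$ vanishes), this yields $D(x,\theta^2,\rho^2)=\int_0^x S^{\dagger}(t,\theta^2)S(t,\rho^2)\,dt$ for real $\theta,\rho$ (here $\bar\la=\theta^2$ since the $\rho_{nks}$ are real). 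This representation shows that $D(x,\theta^2,\rho^2)$ is in fact entire in $\theta$, with no genuine singularity at $\theta^2=\rho^2$, a fact I will need for the Lipschitz estimate.

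Next I would insert the transformation-operator representation $\rho\,S(t,\rho^2)=\sin\rho t\,I_m+\int_0^t\mathcal K(t,\tau)\sin\rho\tau\,d\tau$, with a matrix kernel $\mathcal K$ that is continuous and whose relevant first derivatives are integrable (a consequence of $q_{jk}\in L_2$). Expanding the product $S^{\dagger}(t,\theta^2)S(t,\rho^2)$ under the integral into four terms and computing the leading one explicitly gives
$$
\int_0^x\sin\theta t\,\sin\rho t\,dt=\tfrac12\Bigl(\tfrac{\sin((\theta-\rho)x)}{\theta-\rho}-\tfrac{\sin((\theta+\rho)x)}{\theta+\rho}\Bigr),
$$
whose norm is at most $C\min\{x,|\theta-\rho|^{-1}\}+C(\theta+\rho)^{-1}$. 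The three remaining terms carry the kernel $\mathcal K$; I would bound them by integrating the oscillatory factor $\sin\theta t$ by parts (a Riemann--Lebesgue argument), which reproduces the same $(\theta\pm\rho)^{-1}$-type decay while gaining an additional negative power of $n$ or $k$, so that these terms are subordinate to the leading one.

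I would then recombine and divide by $\theta\rho\asymp nk$. Recalling from the asymptotics~\eqref{asymptla} and the grouping~\eqref{defG} that the centres of $G_n$ and $G_k$ are separated by about $\tfrac12|n-k|$, so that $|\theta-\rho|\ge c|n-k|$ when $n\ne k$ while $\theta+\rho\asymp n+k$, I obtain $\|D(x,\theta^2,\rho^2)\|\le C\,[nk(|n-k|+1)]^{-1}$; in the resonant case $n=k$ the leading term is merely bounded (by $x\le\pi$), which is exactly what the factor $|n-k|+1=1$ permits. For the difference estimate I would exploit analyticity in $\theta$: the bound just derived persists for complex $\theta$ in a disc of fixed $O(1)$ radius about the real group $G_k$, since a bounded imaginary part keeps $|\sin\theta t|$ uniformly controlled on $[0,\pi]$ and does not spoil the separation $|\theta-\rho|\gtrsim|n-k|$. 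Applying Cauchy's estimate (equivalently Schwarz's lemma, as in \cite[Section~1.6.1]{FY01}) to the entire function $\theta\mapsto D(x,\theta^2,\rho^2)$ converts the uniform bound into $\|D(x,\theta^2,\rho^2)-D(x,\chi^2,\rho^2)\|\le C|\theta-\chi|\,[nk(|n-k|+1)]^{-1}$ for $\theta,\chi\in G_k$.

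I expect the main obstacle to be the oscillatory-integral bookkeeping for the kernel terms: securing the gain $(|n-k|+1)^{-1}$ uniformly in all four terms, correctly handling the near-resonant regime $n=k$ (small $|\theta-\rho|$, which the integral representation guarantees is not an actual singularity), and verifying that the complex-$\theta$ extension needed for Schwarz's lemma preserves every estimate with constants independent of $n$, $k$, $\rho$, $\theta$, $\chi$ and $x$.
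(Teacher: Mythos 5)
Your plan is correct and coincides with what the paper does: the paper's proof of this lemma is a one-line reference to the standard argument of \cite[Section~1.6.1]{FY01}, which is exactly the combination you describe --- the Lagrange-identity representation $D(x,\theta^2,\rho^2)=\int_0^x S^{\dagger}(t,\theta^2)S(t,\rho^2)\,dt$ to remove the apparent singularity, the asymptotics of $S$ together with the separation $|\theta-\rho|\ge c|n-k|$ between the groups $G_n$, $G_k$ to produce the factor $(|n-k|+1)^{-1}$, and Schwarz's lemma on a fixed-radius complex neighbourhood of $G_k$ for the Lipschitz estimate. The only cosmetic difference is your oscillatory-integral treatment of the transformation-operator remainders (for which the bound $1/\max(n,k)\le 2/(|n-k|+1)$ already suffices) in place of the usual dichotomy between the Wronskian numerator bound for $|\theta-\rho|\ge 1$ and the integral representation for $|\theta-\rho|\le 1$; both yield the stated estimates with constants uniform in $n$, $k$, $\rho$, $\theta$, $\chi$ and $x$.
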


\begin{proof}
This lemma is proved by the standard approach, based on Schwarz's Lemma (see \cite[Section~1.6.1]{FY01}). 
\end{proof}

\begin{lem} \label{lem:estRkn}
For $n, k \in \mathbb N$, $x \in [0, \pi]$, the following estimate holds:
\begin{equation} \label{estRkn}
    \| R_{k,n}(x) \|_{B(G_k) \to B(G_n)} \le \frac{C k \xi_k}{n (|n - k| + 1)},
\end{equation}
where the constant $C$ does not depend on $n$, $k$ and $x$.
\end{lem}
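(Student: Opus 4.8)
The plan is to bound separately the two quantities that make up the $B(G_n)$-norm \eqref{normBG} of the image $f_k R_{k,n}(x)$, namely the pointwise part $\max_{\rho \in G_n} \|(f_k R_{k,n}(x))(\rho)\|$ and the divided-difference part $\max_{\rho \ne \rho'}|\rho - \rho'|^{-1}\|(f_k R_{k,n}(x))(\rho) - (f_k R_{k,n}(x))(\rho')\|$, and to show that each is dominated by $\frac{Ck\xi_k}{n(|n-k|+1)}\|f_k\|_{B(G_k)}$. The two basic ingredients are the Schwarz-type bounds of Lemma~\ref{lem:Schwarz} for the kernel $D$, together with the growth estimate $\|\al'_{lj\eta}\| \le Ck^2$, which follows from \eqref{estal} and from the fact that every index $l$ occurring in $G_k$ satisfies $l \asymp k$. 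Throughout I would fix $\rho \in G_n$, expand $(f_k R_{k,n}(x))(\rho)$ by \eqref{defRkn}, and split the summation over $(l,j)$ according to the partition $G_k = \cup_{i=1}^{p_k} G_{ki}$ supplied by Lemma~\ref{lem:partG}.

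For the pointwise part I would, inside each fine collection $G_{ki}$, choose a reference node $\theta_i \in G_{ki}$ and rewrite each summand $f_k(\rho_{lj\eta})\al'_{lj\eta}D(x,\rho_{lj\eta}^2,\rho^2)$ as a sum of three terms: one carrying the increment $f_k(\rho_{lj\eta}) - f_k(\theta_i)$, one carrying the increment $D(x,\rho_{lj\eta}^2,\rho^2) - D(x,\theta_i^2,\rho^2)$, and a main term with $f_k$ and $D$ frozen at $\theta_i$. Using $\|f_k(\rho_{lj\eta}) - f_k(\theta_i)\| \le \|f_k\|_{B(G_k)}|\rho_{lj\eta} - \theta_i|$, the bound $\|\al'_{lj\eta}\| \le Ck^2$, and the two estimates of Lemma~\ref{lem:Schwarz}, each of the two increment terms is dominated by $\frac{Ck}{n(|n-k|+1)}\|f_k\|_{B(G_k)}\sum_{i}\sum_{\rho,\theta \in G_{ki}}|\rho-\theta|$, which is $\le \frac{Ck\xi_k}{n(|n-k|+1)}\|f_k\|_{B(G_k)}$ by the first summand of \eqref{defxi}. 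Summing the frozen main terms over $(l,j) \in G_{ki}$ collapses the weight matrices into the combination $f_k(\theta_i)[\al(G_{ki}) - \tilde\al(G_{ki})]D(x,\theta_i^2,\rho^2)$, in the notation of Lemma~\ref{lem:partG}.

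The main obstacle is exactly this frozen main term, since the only bound immediately available from \eqref{defxi} is $\sum_i\|\al(G_{ki}) - \tilde\al(G_{ki})\| \le k^3\xi_k$, which overshoots the target by one power of $k$. To recover that power I would introduce a single global reference $\theta^* \in G_k$ (recall that all nodes of $G_k$ share the same leading asymptotic term, hence lie within $O(1/k)$ of one another) and split once more, writing $D(x,\theta_i^2,\rho^2) = D(x,(\theta^*)^2,\rho^2) + [D(x,\theta_i^2,\rho^2) - D(x,(\theta^*)^2,\rho^2)]$ and $f_k(\theta_i) = f_k(\theta^*) + [f_k(\theta_i) - f_k(\theta^*)]$. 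The two increment contributions each carry a factor $|\theta_i - \theta^*| \le C/k$, so that against $\sum_i\|\al(G_{ki}) - \tilde\al(G_{ki})\| \le k^3\xi_k$ (second summand of \eqref{defxi}) they produce precisely $\frac{Ck\xi_k}{n(|n-k|+1)}\|f_k\|_{B(G_k)}$; in the remaining doubly-frozen term the fine sums telescope, $\sum_i[\al(G_{ki}) - \tilde\al(G_{ki})] = \al(G_k) - \tilde\al(G_k)$, whose norm is controlled by $k^2\xi_k$ through the last summand of \eqref{defxi}. This is exactly why the weights $k^{-3}$ and $k^{-2}$ are assigned to the fine and coarse differences in the definition of $\xi_k$.

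Finally, for the divided-difference part of the $B(G_n)$-norm I would repeat the identical decomposition, now estimating the increments $D(x,\theta^2,\rho^2) - D(x,\theta^2,(\rho')^2)$ in the second ($G_n$) argument. These satisfy the analogue $\le \frac{C|\rho-\rho'|}{nk(|n-k|+1)}$ of Lemma~\ref{lem:Schwarz}, obtained by running the same Schwarz's Lemma argument in the $\mu$-variable of \eqref{defD}; the extra factor $|\rho - \rho'|$ then cancels against the $|\rho - \rho'|^{-1}$ in \eqref{normBG}, yielding the same bound. Taking the maximum of the pointwise and divided-difference estimates gives \eqref{estRkn}.
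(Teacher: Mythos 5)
Your proposal is correct and follows essentially the same route as the paper's proof: the same three-level decomposition (increments of $f$ and of $D$ within the fine collections $G_{ki}$, then increments relative to a single global reference in $G_k$, then the fully frozen term), with each level matched to the corresponding summand of $\xi_k$ in \eqref{defxi} and controlled by Lemma~\ref{lem:Schwarz} together with $\|\al'_{lj\eta}\|\le Ck^2$. The only differences are cosmetic: you merge the paper's first split (pairing the $\eta=0$ and $\eta=1$ terms, its $J_1,J_2$) with the freezing at the fine reference points (its $J_4,J_5$) into a single step, and you explicitly address the divided-difference half of the $B(G_n)$-norm, which the paper leaves implicit.
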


\begin{proof}
Fix $n, k \in \mathbb N$ and $x \in [0, \pi]$. Let $h$ be an arbitrary element of $G_k$. Put $\eta_{k,n} := h R_{k,n}(x)$. 
Clearly, $\eta_{k,n} \in G_n$.
Let us prove that
\begin{equation} \label{etakn}
    \| \eta_{k, n} \|_{B(G_n)} \le \frac{C k \xi_k \| h \|_{B(G_k)}}{n (|n - k| + 1)},
\end{equation}
where the constant $C$ does not depend on $n$, $k$, $x$ and $h$. Obviously, the estimate~\eqref{etakn} implies~\eqref{estRkn}.

According to \eqref{normBG}, we have
\begin{equation} \label{normeta}
    \| \eta_{k, n} \|_{B(G_n)} = \max \left\{ \max_{\rho \in G_n} \| \eta_{k, n}(\rho) \|, \, \max_{\substack{\rho \ne \theta \\ \rho, \theta \in G_n}} |\rho - \theta|^{-1} \| \eta_{k, n}(\rho) - \eta_{k, n}(\theta) \|\right\}.
\end{equation}

First we prove the estimate
\begin{equation} \label{esteta1}
\| \eta_{k, n}(\rho) \| \le \frac{C k \xi_k \| h \|_{B(G_k)}}{n (|n - k| + 1)}, \quad \rho \in G_n.
\end{equation}
In view of the definition~\eqref{defRkn} of $R_{k, n}(x)$, we have
$$
\eta_{k, n}(\rho) = \sum_{(l, j) \colon \rho_{lj0}, \rho_{lj1} \in G_k} (h(\rho_{lj0}) \al'_{lj0} D(x, \rho_{lj0}^2, \rho^2) - h(\rho_{lj1}) \al'_{lj1} D(x, \rho_{lj1}^2, \rho^2)).
$$
We derive that
$$
\eta_{k, n}(\rho) = J_1 + J_2 + J_3, 
$$
\begin{align*}
& J_1 := \sum_{(l, j) \colon \rho_{lj0}, \rho_{lj1} \in G_k} (h(\rho_{lj0}) - h(\rho_{lj1})) \al'_{lj0} D(x, \rho_{lj0}^2, \rho^2), \\
& J_2 := \sum_{(l, j) \colon \rho_{lj0}, \rho_{lj1} \in G_k} h(\rho_{lj1}) \al'_{lj1} (D(x, \rho_{lj0}^2, \rho^2) - D(x, \rho_{lj1}^2, \rho^2)), \\
& J_3 := \sum_{(l, j) \colon \rho_{lj0}, \rho_{lj1} \in G_k}
h(\rho_{lj1}) (\al'_{lj0} - \al'_{lj1}) D(x, \rho_{lj1}^2, \rho^2).
\end{align*}

Using~\eqref{normBG} for $h \in B(G_k)$ together with~\eqref{defxi}, we obtain the estimates
\begin{equation} \label{smeq1}
    \| h(\rho_{lj1}) \| \le \| h \|_{B(G_k)}, \quad
    \| h(\rho_{lj0}) - h(\rho_{lj1}) \| \le \xi_k \| h \|_{B(G_k)}, \quad \rho_{lj0}, \rho_{lj1} \in G_k.
\end{equation}
Lemma~\ref{lem:Schwarz} implies
\begin{equation} \label{smeq2}
    \| D(x, \rho_{lj0}^2, \rho) \| \le \frac{C}{nk (|n - k| + 1)}, \quad
    \| D(x, \rho_{lj0}^2, \rho^2) - D(x, \rho_{lj1}^2, \rho^2) \| \le \frac{C \xi_k}{nk (|n - k| + 1)}, 
\end{equation}
for all $\rho \in G_n$, $\rho_{lj0}, \rho_{lj1} \in G_k$.
It follows from \eqref{estal}, that 
\begin{equation} \label{smeq3}
    \| \al'_{ljs} \| \le C k^2, \quad \rho_{ljs} \in G_k.
\end{equation}
Using~\eqref{smeq1}, \eqref{smeq2} and~\eqref{smeq3}, we obtain the estimate
\begin{equation} \label{estJ}
    \| J_s \| \le \frac{C k \xi_k \| h \|_{B(G_k)}}{n (|n - k| + 1)}
\end{equation}
for $s = 1, 2$. It remains to prove~\eqref{estJ} for $J_3$.

Consider the partition~\eqref{partGk} of the collection $G_k$. For every $G_{ki}$, denote by $\rho_*(G_{ki})$ a fixed value $\rho_{lj1} \in G_{ki}$. We represent $J_3$ in the following form:
$$
J_3 = J_4 + J_5 + J_6,
$$
\vspace*{-30pt}
\begin{align*}
    & J_4 := \sum_{i = 1}^{p_k} \sum_{(l, j) \colon \rho_{lj1} \in G_{ki}} (h(\rho_{lj1}) - h (\rho_*(G_{ki}))) (\al'_{lj0} - \al'_{lj1}) D(x, \rho_{lj1}^2, \rho^2), \\
    & J_5 := \sum_{i = 1}^{p_k} \sum_{(l, j) \colon \rho_{lj1} \in G_{ki}} h(\rho_*(G_{ki})) (\al'_{lj0} - \al'_{lj1}) (D(x, \rho_{lj1}^2, \rho^2) - D(x, \rho_*^2(G_{ki}), \rho^2)), \\
    & J_6 := \sum_{i = 1}^{p_k} h(\rho_*(G_{ki})) (\al(G_{ki}) - \tilde \al(G_{ki})) D(x, \rho_*^2(G_{ki}), \rho^2).
\end{align*}

Using \eqref{normBG} and \eqref{defxi}, we get
\begin{equation} \label{smeq4}
    \| h (\rho_*(G_{ki})) \| \le \| h \|_{B(G_k)}, \quad \| h(\rho_{lj1}) - h(\rho_*(G_{ki})) \| \le  \xi_k \| h \|_{B(G_k)}, \quad \rho_{lj1} \in G_{ki}, \, i = \overline{1, p_k}.
\end{equation}
Lemma~\ref{lem:Schwarz} implies
\begin{gather} \label{smeq5}
    \| D(x, \rho_{lj1}^2, \rho^2) \| \le \frac{C}{nk (|n - k| + 1)}, \\ \label{smeq6}
    \| D(x, \rho_{lj1}^2, \rho^2) - D(x, \rho_*^2(G_{ki}), \rho^2) \| \le \frac{C \xi_k}{ nk (|n - k| + 1)},
\end{gather}
for all $\rho \in G_n$, $\rho_{lj1} \in G_{ki}$, $i = \overline{1, p_k}$. Combining~\eqref{smeq3}-\eqref{smeq6}, we conclude that~\eqref{estJ} holds for $s = 4, 5$.

In order to prove~\eqref{estJ} for $J_6$, we use the representation
$$
J_6 = J_7 + J_8 + J_9,
$$
\vspace*{-30pt}
\begin{align*}
    & J_7 := \sum_{i = 1}^{p_k} (h(\rho_*(G_{ki})) - h(\rho_*(G_{k1}))) (\al(G_{ki}) - \tilde \al(G_{ki})) D(x, \rho_*^2(G_{ki}), \rho^2), \\
    & J_8 := \sum_{i = 1}^{p_k} h(\rho_*(G_{k1})) (\al(G_{ki}) - \tilde \al(G_{ki})) (D(x, \rho_*^2(G_{ki}), \rho^2) - D(x, \rho_*^2(G_{k1}), \rho^2)), \\
    & J_9 := h(\rho_*(G_{k1})) (\al(G_k) - \tilde \al(G_k)) D(x, \rho_*^2(G_{k1}), \rho^2).
\end{align*}
Using~\eqref{asymptla} and Lemma~\ref{lem:Schwarz}, we obtain
\begin{gather} \label{smeq7}
    \| h(\rho_*(G_{ki})) - h(\rho_*(G_{k1})) \| \le |\rho_*(G_{ki}) - \rho_*(G_{k1})| \| h \|_{B(G_k)} \le \frac{C}{k} \| h \|_{B(G_k)}, \\ \label{smeq8}
    \| D(x, \rho_*^2(G_{ki}), \rho^2) - D(x, \rho_*^2(G_{k1}), \rho^2)\| \le \frac{C |\rho_*(G_{ki}) - \rho_*(G_{k1})|}{nk (|n - k| + 1)} \le \frac{C}{n k^2 (|n - k| + 1)},
\end{gather}
for $\rho \in G_n$, $i = \overline{1, p_k}$. Furthermore, it follows from~\eqref{defxi}, that
\begin{equation} \label{smeq9}
    \| \al(G_{ki}) - \tilde \al(G_{ki}) \| \le k^3 \xi_k, \: i = \overline{1, p_k}, \qquad
    \| \al(G_k) - \tilde \al(G_k) \| \le k^2 \xi_k.
\end{equation}

The estimates \eqref{smeq4}, \eqref{smeq5}, \eqref{smeq7}-\eqref{smeq9} together yield~\eqref{estJ} for $s = 7, 8, 9$. Consequently, the estimate~\eqref{esteta1} is valid.
\end{proof}

\begin{proof}[Proof of Theorem~\ref{thm:R}]
Fix $x \in [0, \pi]$ and suppose that $f = \{ f_n \}_{n \ge 1}$ is an arbitrary element of $B$. By virtue of~\eqref{defB}, we have
\begin{equation} \label{estfk}
    \| f_k \|_{B(G_k)} \le \frac{1}{k} \| f \|_B, \quad k \ge 1.
\end{equation}
The estimates~\eqref{estRkn} and~\eqref{estfk} imply
$$
\| f_k R_{k, n}(x) \|_{B(G_n)} \le \frac{C \xi_k \| f\|_B}{n (|n - k| + 1)}, \quad
n, k \ge 1.
$$
Using the latter estimate together with the definition~\eqref{defR}, we conclude that the series in~\eqref{defR} converges in $B(G_n)$ and
\begin{equation} \label{estfRn}
    \| (f R(x))_n \|_{B(G_n)} \le \sum_{k = 1}^{\iy} \| f_k R_{k, n}(x) \|_{B(G_n)} \le \frac{C \| f \|_{B}}{n} \sum_{k = 1}^{\iy} \xi_k.
\end{equation}
In view of~\eqref{sumxi}, we obtain
$$
\| (f R(x))_n \|_{B(G_n)} \le \frac{C \| f \|_B}{n}, \quad n \ge 1.
$$
According to~\eqref{defB}, we get $\| f R(x) \|_B \le C \| f \|_B$, i.e. the operator $R(x)$ is bounded on $B$.

Let us show that the operator $R(x)$ can be approximated by a sequence of finite-dimensional operators. For $s \ge 1$, define the operator $R^s(x) \colon B \to B$ as follows:
$$
R^s(x) = [R_{k, n}^s(x)]_{n, k = 1}^{\iy}, \quad
R_{k,n}^s(x) = \left\{ \begin{array}{ll}
                    R_{k, n}(x), \quad & k = \overline{1, s}, \\
                    0, \quad & k > s,
               \end{array} \right. \quad n \ge 1.
$$
Using~\eqref{estfRn}, one can easily show that
$$
\lim_{s \to \iy} \| R^s(x) - R(x) \|_{B \to B} = 0.
$$
Thus, the operator $R(x)$ is compact.
\end{proof}

\begin{remark}
Note that all the constants $C$ in the proof of Theorem~\ref{thm:R} do not depend on $x$.
\end{remark}

\begin{cor}
Define 
$$
\Lambda := \left( \sum_{k = 1}^{\iy} (k \xi_k)^2 \right)^{1/2}
$$
and fix $\Lambda_0 > 0$. If $\Lambda \le \Lambda_0$, the estimate $\| R(x) \|_{B \to B} \le C \Lambda$ holds, where the constant $C$ depends only on $\tilde L$ and $\Lambda_0$.
\end{cor}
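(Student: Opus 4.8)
The plan is to track the constant through the boundedness part of Theorem~\ref{thm:R} and to replace the crude sum $\sum_k \xi_k$ by its sharp bound in terms of $\Lambda$. The chain \eqref{estRkn}, \eqref{estfk}, \eqref{estfRn} already established in the proof of Theorem~\ref{thm:R} gives, for every $f = \{ f_n \}_{n \ge 1} \in B$,
$$
n \| (f R(x))_n \|_{B(G_n)} \le C \| f \|_B \sum_{k = 1}^{\iy} \xi_k, \quad n \ge 1,
$$
so it suffices to control $\sum_k \xi_k$. By the Cauchy--Schwarz inequality and \eqref{sumxi},
$$
\sum_{k = 1}^{\iy} \xi_k = \sum_{k = 1}^{\iy} \frac{1}{k} (k \xi_k) \le \left( \sum_{k = 1}^{\iy} \frac{1}{k^2} \right)^{1/2} \left( \sum_{k = 1}^{\iy} (k \xi_k)^2 \right)^{1/2} = \frac{\pi}{\sqrt{6}} \, \Lambda .
$$
Taking the supremum over $n \ge 1$ and over $\| f \|_B \le 1$, I obtain $\| R(x) \|_{B \to B} \le \frac{\pi C}{\sqrt 6} \Lambda$, which is the claimed estimate. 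Note that this last step uses neither $\Lambda \le \Lambda_0$ nor any property of $L$; the hypothesis $\Lambda \le \Lambda_0$ enters only in making the constant $C$ uniform.

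It remains to verify that the constant $C$, which originates from Lemma~\ref{lem:estRkn}, can be chosen depending only on $\tilde L$ and $\Lambda_0$. Inspecting that proof, $C$ is assembled from the weight-matrix bound \eqref{smeq3} and the $D$-estimates of Lemma~\ref{lem:Schwarz}. For $s = 1$ the bound $\| \al'_{lj1} \| \le C k^2$ is a property of the fixed model problem $\tilde L$. For $s = 0$ I would exploit the non-negativity $\al'_{lj0} = (\al'_{lj0})^{\dagger} \ge 0$: in the Loewner order $0 \le \al'_{lj0} \le \al(G_k)$ whenever $\rho_{lj0} \in G_k$, whence $\| \al'_{lj0} \| \le \| \al(G_k) \|$. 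Now
$$
\| \al(G_k) \| \le \| \tilde \al(G_k) \| + \| \al(G_k) - \tilde \al(G_k) \|,
$$
where $\| \al(G_k) - \tilde \al(G_k) \| \le k^2 \xi_k = k (k \xi_k) \le k \Lambda_0$ by \eqref{defxi} and $(k\xi_k)^2 \le \Lambda^2 \le \Lambda_0^2$, while $\| \tilde \al(G_k) \| \le C(\tilde L) k^2$ by \eqref{estal} applied to $\tilde L$. Hence $\| \al'_{lj0} \| \le C(\tilde L, \Lambda_0) k^2$, so \eqref{smeq3} holds with a constant depending only on $\tilde L$ and $\Lambda_0$.

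The main obstacle is to uniformize the constant in Lemma~\ref{lem:Schwarz}. From the Lagrange identity and the initial data $S(0,\la) = 0$, $S'(0,\la) = I_m$, one has $D(x, \theta^2, \rho^2) = \int_0^x S^{\dagger}(t, \theta^2) S(t, \rho^2) \, dt$ (recall $\theta, \rho$ are real here), so the $D$-estimates are governed by the quantity $\sup_{x \in [0,\pi]} (|\rho| + 1) \| S(x, \rho^2) \|$, which is controlled through the standard integral-equation bound for $S$ by $\| Q \|_{L_2(0,\pi)}$. Since $S$, and therefore $D$, belongs to the \emph{unknown} problem $L$, the crux is to bound $\| Q \|_{L_2}$ in terms of $\tilde L$ and $\Lambda_0$ alone. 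I would obtain this from $Q = \tilde Q + \eps$ (Theorem~\ref{thm:findQH}), estimating $\| \eps \|_{L_2}$ via the proximity of the spectral data of $L$ and $\tilde L$ that $\Lambda \le \Lambda_0$ quantifies. This is the delicate point, because of the apparent circularity between the sought bound on $\| R(x) \|_{B \to B}$ and the a priori control of the potential; I expect it to be resolved by the same grouping and non-negativity arguments used above, estimating the spectral series directly rather than passing through $Q$.
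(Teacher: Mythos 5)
Your first paragraph is precisely the paper's (implicit) argument: the corollary carries no separate proof in the paper and is meant to follow from the estimate \eqref{estfRn} in the proof of Theorem~\ref{thm:R}, the preceding remark that the constants there are $x$-independent, and the Cauchy--Schwarz bound $\sum_k \xi_k \le \bigl(\sum_k k^{-2}\bigr)^{1/2}\Lambda$. So the core of your proof is correct and matches the intended one. Your scrutiny of the constant goes beyond what the paper records, and the non-negativity argument $0 \le \al'_{lj0} \le \al(G_k)$, combined with $\| \al(G_k) - \tilde\al(G_k) \| \le k^2\xi_k \le k\Lambda_0$, is a clean way to make \eqref{smeq3} depend only on $\tilde L$ and $\Lambda_0$; the paper simply invokes \eqref{estal}, whose constant a priori depends on $L$, so on this point your version is actually tighter than the source.

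The one place where you stop short --- uniformizing the constant of Lemma~\ref{lem:Schwarz} --- is a real issue for $R(x)$ as literally defined, and the circularity you describe (bounding $\|Q\|_{L_2}$ via $\eps$, which requires the solved main equation) is not resolved in this paper. But note that the operator whose norm one actually needs to control uniformly is $\tilde R(x)$, the one appearing in the main equation \eqref{main}; the paper treats the two as interchangeable (``the assertion of Theorem~\ref{thm:R} is valid for $\tilde R(x)$''). For $\tilde R(x)$ the kernel is built from $\tilde D$, i.e.\ from $\tilde S$ of the \emph{fixed} model problem, so the Schwarz-lemma constant depends only on $\tilde Q$; the unknown problem enters only through the evaluation points $\rho_{lj0}$ and the weights $\al'_{lj0}$, and both are controlled by the spectral data via $\Lambda \le \Lambda_0$ exactly as in your second paragraph (the points stay in bounded neighbourhoods of the $\tilde\rho_{lj}$ since $|\rho_{lj0}-\rho_{lj1}| \le \xi_k \le \Lambda_0$). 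Read this way, your argument closes without any a priori bound on $Q$. If the corollary is insisted upon for $R(x)$ itself, then the gap you identify stands, and the honest statement is that $C$ there also depends on $\|Q\|_{L_2}$ unless one first proves a stability estimate of the kind you sketch.
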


\begin{proof}[Proof of Lemma~\ref{lem:eps}]
The definition \eqref{defeps} of $\eps_0(x)$ can be rewritten in the following form:
\begin{equation} \label{epsnew}
\eps_0(x) = \sum_{k = 1}^{\iy} \mathcal E_k(x), \quad
\mathcal E_k(x) := \sum_{(l, j) \colon \rho_{lj0}, \rho_{lj1} \in G_k} 
(S_{lj0}(x) \al'_{lj0} \tilde S^{\dagger}_{lj0}(x) - S_{lj1}(x) \al'_{lj1} \tilde S^{\dagger}_{lj1}(x)).
\end{equation}
The further arguments resemble the proof of Lemma~\ref{lem:estRkn}. Recall that every collection $G_k$ is divided into smaller collections $\{ G_{ki} \}_{i = 1}^{p_k}$ (see Lemma~\ref{lem:partG}). For every collection $G_{ki}$, we have chosen an arbitrary element $\rho_{lj1}$ and denoted it as $\rho_*(G_{ki})$. For brevity, denote the corresponding matrix function $S_{lj1}(x)$ by $S_*(x, G_{ki})$. Then we derive the relation
\begin{align} \nonumber
    \mathcal E_k(x) & = \sum_{(l, j) \colon \rho_{lj0}, \rho_{lj1} \in G_k} (S_{lj0}(x) - S_{lj1}(x)) \al'_{lj0} \tilde S^{\dagger}_{lj0}(x) \\ \nonumber 
    &  + \sum_{(l, j) \colon \rho_{lj0}, \rho_{lj1} \in G_k} S_{lj1}(x) \al'_{lj0} (\tilde S^{\dagger}_{lj0}(x) - \tilde S^{\dagger}_{lj1}(x)) \\ \nonumber & + \sum_{i = 1}^{p_k} \sum_{(l, j) \colon \rho_{lj1} \in G_{ki}} (S_{lj1}(x) - S_*(x, G_{ki})) (\al'_{lj0} - \al'_{lj1}) \tilde S^{\dagger}_{lj1}(x) \\ \nonumber
    & + \sum_{i = 1}^{p_k} \sum_{(l, j) \colon \rho_{lj1} \in G_{ki}} S_*(x, G_{ki}) (\al'_{lj0} - \al'_{lj1}) (\tilde S^{\dagger}_{lj1}(x) - \tilde S^{\dagger}_*(x, G_{ki})) \\ \nonumber
    & + \sum_{i = 1}^{p_k} (S_*(x, G_{ki}) - S_*(x, G_{k1})) (\al(G_{ki}) - \tilde \al(G_{ki})) \tilde S^{\dagger}_*(x, G_{ki}) \\ \nonumber
    & + \sum_{i = 1}^{p_k} S_*(x, G_{k1}) (\al(G_{ki}) - \tilde \al(G_{ki})) (\tilde S^{\dagger}_*(x, G_{ki}) - \tilde S^{\dagger}_*(x, G_{k1})) \\ \label{longE}
    & + S_*(x, G_{k1}) (\al(G_k) - \tilde \al(G_k)) \tilde S^{\dagger}_*(x, G_{k1}).
\end{align}

The relations~\eqref{estpsi} and~\eqref{defxi} yield
\begin{equation} \label{smeq10}
\left.
\begin{array}{c}
\| S_{ljs}(x) \| \le \frac{C}{k}, \quad
\| S_{lj0}(x) - S_{lj1}(x) \| \le \frac{C}{k} |\rho_{lj0} - \rho_{lj1}| \le \frac{C \xi_k}{k}, \quad \rho_{ljs} \in G_k, \\[10pt]
\| S_{lj1}(x) - S_*(x, G_{ki}) \| \le \frac{C \xi_k}{k}, \quad \rho_{lj1} \in G_{ki}, \: i = \overline{1, p_k}, \\[10pt]
\| S_*(x, G_{ki}) - S_*(x, G_{k1}) \| \le \frac{C}{k}, \quad i = \overline{1, p_k},
\end{array} \right\}
\end{equation}
where the constant $C$ does not depend on $k \in \mathbb N$ and on $x \in [0, \pi]$. The similar estimates are also valid for $\tilde S^{\dagger}$.

Using~\eqref{smeq3}, \eqref{smeq9}, \eqref{longE} and \eqref{smeq10}, we conclude that $\| \mathcal E_k(x) \| \le C \xi_k$, $k \in \mathbb N$, $x \in [0, \pi]$. Consequently, the series \eqref{epsnew} of continuous functions converges absolutely and uniformly with respect to $x \in [0, \pi]$, and
$$
\| \eps_0(x) \| \le C \sum_{k = 1}^{\iy} \xi_k \le C \Lambda.
$$

Next we show that the series $\sum_{k = 1}^{\iy} \mathcal E'_k(x)$ converges in $L_2$-norm. For definiteness, consider the first sum in~\eqref{longE}:
$$
Z_{1, k}(x) := \sum_{(l, j) \colon \rho_{lj0}, \rho_{lj1} \in G_k} (S_{lj0}(x) - S_{lj1}(x)) \al'_{lj0} \tilde S^{\dagger}_{lj0}(x).
$$
Differentiation yields
$$
    Z'_{1, k}(x) = \sum_{(l, j) \colon \rho_{lj0}, \rho_{lj1} \in G_k} \bigl( (S'_{lj0}(x) - S'_{lj1}(x)) \al'_{lj0} \tilde S^{\dagger}_{lj0}(x) 
    +  (S_{lj0}(x) - S_{lj1}(x)) \al'_{lj0} (\tilde S^{\dagger}_{lj0})'(x) \bigr)
$$
Further we use the asymptotic expressions
\begin{gather*}
S'_{lj0}(x) - S'_{lj1}(x) = -(\rho_{lj0} - \rho_{lj1}) x \sin (\rho_{lj0} x) I_m + O\left( \frac{\xi_k}{k}\right), \quad
\tilde S^{\dagger}_{lj0}(x) = \frac{\sin (\rho_{lj0} x)}{\rho_{lj0}} I_m + O\left( \frac{1}{k^2}\right), \\
S_{lj0}(x) - S_{lj1}(x) = \frac{(\rho_{lj0} - \rho_{lj1}) x \cos (\rho_{lj0} x)}{\rho_{lj0}} I_m + O\left( \frac{\xi_k}{k^2}\right), \quad
(\tilde S^{\dagger}_{lj0})'(x) = \cos (\rho_{lj0} x) I_m + O\left( \frac{1}{k} \right),
\end{gather*}
for $\rho_{lj0}, \rho_{lj1} \in G_k$. Here the $O$-estimates are uniform with respect to $x \in [0, \pi]$.
Taking the grouping \eqref{defG} into account, we define
$$
n_1 = 0, \quad n_{2j} = n_0 + j - 1/2, \quad n_{2j+1} = n_0 + j, \quad j \in \mathbb N. 
$$
Clearly,
$$
\rho_{lj0} = n_k + O\left( \frac{1}{k} \right), \quad \rho_{lj0} \in G_k,
$$
i.e. $n_k$ is the main part in the asymptotics~\eqref{asymptla} of the values from the collection $G_k$.
Finally, we get 
$$
Z'_{1,k}(x) = \Gamma_k x \sin(2 n_k x) + O(\xi_k), \quad \Gamma_k \in \mathbb C^{m \times m}, \quad k \in \mathbb N, \quad 
x \in [0, \pi], \quad \{ \| \Gamma_k \| \} \in l_2.
$$
Consequently, the elements of the matrix series $\sum_{k = 1}^{\iy} Z'_{1,k}(x)$ converge in $L_2(0, \pi)$. 
The similar technique can be applied to all the other terms in~\eqref{longE}. Thus, the elements of the matrix function $\eps(x)$ belong to $L_2(0, \pi)$. 
\end{proof}

\begin{proof}[Proof of Theorem~\ref{thm:findQH}]
{\bf Step 1.} First, we derive the relation for $Q(x)$. Using~\eqref{eqv} and~\eqref{defD}, we obtain $D'(x, \la, \mu) = S^{\dagger}(x, \bar \la) S(x, \mu)$. Then, formally differentiating~\eqref{intS} twice with respect to $x$, we get
\begin{multline*}
    \tilde S''(x, \la) = S''(x, \la) + \frac{1}{2\pi i} \int_{\ga} S''(x, \mu) \hat M(\mu) \tilde D(x, \mu, \la)\, d\mu  \\ + \frac{1}{\pi i} \int_{\ga} S'(x, \mu) \hat M(\mu) \tilde S^{\dagger} (x, \bar \mu) \, d\mu \, \tilde S(x, \la)
    + \frac{1}{2 \pi i} \int_{\ga} S(x, \mu) \hat M(\mu) \frac{d}{dx} (\tilde S^{\dagger}(x, \bar \mu) \tilde S(x, \la))\, d\mu.
\end{multline*}
Further we express the second derivatives from~\eqref{eqv} and use~\eqref{defD}, so we obtain
\begin{multline*}
    (\tilde Q(x) - \la) \tilde S(x, \la) = (Q(x) - \la) S(x, \la) + \frac{1}{2\pi i} \int_{\ga} (Q(x) - \mu) S(x, \mu) \hat M(\mu) \tilde D(x, \mu, \la) \, d\mu\\
    + 2 \frac{d}{dx} \left[ \frac{1}{2\pi i} \int_{\ga} S(x, \mu) \hat M(\mu) \tilde S^{\dagger}(x, \bar \mu) d\mu\right] \tilde S(x, \la) +
    \frac{1}{2 \pi i} \int_{\ga} S(x, \mu) \hat M(\mu) (\mu - \la) \tilde D(x, \mu, \la) \,d\mu
\end{multline*}
By Residue Theorem, the integral in the square brackets $[\ldots]$ equals $\eps_0(x)$, defined by~\eqref{defeps}. Consequently, taking~\eqref{intS} into account, we get the relation
$$
(\tilde Q(x) - Q(x)) \tilde S(x, \la) = 2 \eps_0'(x) \tilde S(x, \la),
$$
which implies $Q(x) = \tilde Q(x) + \eps(x)$.

{\bf Step 2.} Let us derive the relation for $H$ in \eqref{findQH}. Similarly to~\eqref{intS}, one can obtain the relation for the Weyl solution:
\begin{equation} \label{intPhi}
\tilde \Phi(x, \la) = \Phi(x, \la) + \frac{1}{2 \pi i} \int_{\ga} S(x, \mu) \hat M(\mu) \tilde E(x, \mu, \la) \, dx,
\end{equation}
where
\begin{equation} \label{defE}
\tilde E(x, \mu, \la) := \frac{\langle \tilde S^{\dagger}(x, \bar \mu), \tilde \Phi(x, \la) \rangle}{\mu - \la}, \quad
\tilde E'(x, \mu, \la) = \tilde S^{\dagger}(x, \bar \mu) \tilde \Phi(x, \la).
\end{equation}
Using~\eqref{intPhi} and~\eqref{defE}, we derive
\begin{equation} \label{VPhi}
V(\tilde \Phi) = V(\Phi) + \frac{1}{2\pi i} \int_{\ga} V(S(x, \mu)) \hat M(\mu) \tilde E(\pi, \mu, \la) \, d\mu + T \frac{1}{2\pi i} \int_{\ga} S(\pi, \mu) \hat M(\mu) \tilde S^{\dagger}(\pi, \bar \mu) \, d\mu \, \tilde \Phi(\pi, \la).
\end{equation} 

Recall that $V(\Phi) = 0$. Further it is shown that the first integral in~\eqref{VPhi} also equals zero. Indeed, the definition~\eqref{defE} yields
\begin{equation} \label{Epi}
\tilde E(\pi, \mu, \la) = \tilde S^{\dagger}(\pi, \bar \mu) (T + T^{\perp}) \tilde \Phi'(\pi, \la) - (\tilde S^{\dagger})'(\pi, \bar \mu) (T + T^{\perp})\tilde \Phi(\pi, \la).
\end{equation}
The projectors $T$ and $T^{\perp}$ are mutually orthogonal,
so it follows from the condition $\tilde V(\tilde \Phi) = 0$, that
$$
    T^{\perp} \tilde \Phi(\pi, \la) = 0, \quad T \tilde \Phi'(\pi, \la) = T \tilde H \tilde \Phi(\pi, \la).
$$
Consequently, the relation~\eqref{Epi} implies
\begin{equation} \label{Epi2}
\tilde E(\pi, \mu, \la) = \tilde S^{\dagger}(\pi, \bar \mu) T \tilde H \tilde \Phi(\pi, \la) + \tilde S^{\dagger}(\pi, \bar \mu) T^{\perp} \tilde \Phi'(\pi, \la) -  (\tilde S^{\dagger})'(\pi, \bar \mu) T \tilde \Phi(\pi, \la).
\end{equation}
Consider the linear form
$$
\tilde V^{\dagger} (\tilde S^{\dagger}(x, \bar \mu)) = ((\tilde S^{\dagger})'(\pi, \bar \mu) - \tilde S^{\dagger}(\pi, \bar \mu) \tilde H) T - \tilde S^{\dagger}(\pi, \bar \mu) T^{\perp}.
$$
One can show that the matrix functions $V(S(x, \mu)) M(\mu)$ and $\tilde M(\mu) \tilde V^{\dagger} (\tilde S^{\dagger}(x, \bar \mu))$ are entire. Consequently, the matrix functions $\tilde M(\mu) ((\tilde S^{\dagger})'(\pi, \bar \mu) - \tilde S^{\dagger}(\pi, \bar \mu) \tilde H) T$ and 
$\tilde M(\mu)\tilde S^{\dagger}(\pi, \bar \mu) T^{\perp}$ are also entire.
Therefore, in view of~\eqref{Epi2}, the first integral in~\eqref{VPhi} vanishes, so we get $V(\tilde \Phi) = T \eps_0(\pi) \tilde \Phi(\pi, \la)$. 

Obviously,
$$
V(\tilde \Phi) = \tilde V(\tilde \Phi) + T (H - \tilde H) \tilde \Phi(\pi, \la).
$$
Since $\tilde V(\tilde \Phi) = 0$, we obtain that 
$$
T (\tilde H - H - \eps_0(\pi) ) \tilde \Phi(\pi, \la) = 0.
$$
Using the asymptotic formula for the Weyl solution:
$$
\tilde \Phi(\pi, -\tau^2) = 2 T \exp(-\tau \pi) (1 + O(\tau^{-1})), \quad \tau \to +\iy,
$$
we conclude that $H = \tilde H - T \eps_0(\pi) T$.
\end{proof}

\section{Inverse Problem on the Star-Shaped Graph}

In this section, we apply our results to the Sturm-Liouville eigenvalue problem on the star-shaped graph \cite{Piv07, Bond19} in the form
\begin{equation} \label{eqvg}
-y_j'' + q_j(x) y_j = \la y_j, \quad x \in (0, \pi), \quad j = \overline{1, m},
\end{equation}
with the standard matching conditions
\begin{equation} \label{mc}
y_1(\pi) = y_j(\pi), \: j = \overline{2, m}, \qquad \sum_{j = 1}^m y_j'(\pi) = 0, \qquad
y_j(0) = 0, \: j = \overline{1, m},
\end{equation}
where $\{ q_j \}_{j = 1}^m$ are real-valued functions from $L_2(0, \pi)$.
Clearly, the boundary value problem~\eqref{eqvg}-\eqref{mc} can be rewritten in the form~\eqref{eqv}-\eqref{bc} with the diagonal matrix potential $Q(x) = \diag\{ q_j(x) \}_{j = 1}^m$, $H = 0$ and
$T = [T_{jk}]_{j,k = 1}^m$, $T_{jk} = \frac{1}{m}$, $j, k = \overline{1, m}$.

In this section, we use the notation $A^{(ii)} = a_{ii}$, $i = \overline{1, m}$, for diagonal elements of a matrix $A = [a_{jk}]_{j, k = 1}^m$. Yurko \cite{Yur05} has proved the uniqueness theorem and suggested an approach to solution of the following inverse problem.

\begin{ip} \label{ip:2}
Given the eigenvalues $\{ \la_{nk} \}_{n \in \mathbb N, \, k = \overline{1, m}}$ and the elements $\{ \al_{nk}^{(ii)} \}_{n \in \mathbb N, \, k = \overline{1, m}, \, i = \overline{1, m-1}}$ of the weight matrices, construct $\{ q_j \}_{j = 1}^m$.
\end{ip}

Thus, since the potential $Q(x)$ is diagonal, it is sufficient to use only the diagonal elements of the weight matrices, excluding the last elements $\{ \al_{nk}^{(mm)} \}_{n \in \mathbb N, \, k = \overline{1, m}}$. The method of Yurko consists of two steps.

\begin{alg} \label{alg:graph}
Let the data $\{ \la_{nk} \}_{n \in \mathbb N, \, k = \overline{1, m}}$ and $\{ \al_{nk}^{(ii)} \}_{n \in \mathbb N, \, k = \overline{1, m}, \, i = \overline{1, m-1}}$ be given. We have to construct $\{ q_i \}_{i = 1}^m$.

\begin{enumerate}
\item  Solving {\it local inverse problems}. For each $i = \overline{1, m-1}$, find $q_i(x)$, by using the data $\{ \la_{nk}, \al_{nk}^{(ii)} \}_{n \in \mathbb N, \, k = \overline{1, m}}$.
\item {\it Returning procedure}. Using the given data and the already constructed potentials $\{ q_i \}_{i = 1}^{m-1}$, find $q_m$.
\end{enumerate}
\end{alg}

For the first step of Algorithm~\ref{alg:graph}, Yurko suggested to derive the main equations in appropriate Banach spaces, 
by using the method of spectral mappings. Now we can easily obtain such main equations and prove their unique solvability, 
relying on the results of Section~3.

Diagonality of the matrix potential $Q(x)$ implies that the matrix functions $S(x, \la)$, $\tilde S(x, \la)$ and $\tilde D(x, \la, \mu)$ are also diagonal. Consequently, taking only the main diagonal in the system~\eqref{mainS}, we arrive at the scalar equations
\begin{equation} \label{mainSi}
\tilde S_{nks}^{(ii)}(x) = S_{nks}^{(ii)}(x) + \sum_{l = 1}^{\iy} \sum_{j = 1}^m (S_{lj0}^{(ii)} \al_{lj0}'^{(ii)} \tilde D^{(ii)}(x, \la_{lj0}, \la_{nks}) - S_{lj1}^{(ii)}(x) \al_{lj1}'^{(ii)} \tilde D^{(ii)}(x, \la_{lj1}, \la_{nks})),
\end{equation}
where $n \in \mathbb N$, $k = \overline{1, m}$, $s = 0, 1$. Equations~\eqref{mainSi} can be considered separately for each $i = \overline{1, m}$.

Denote the Banach space $B$ for $m = 1$ by $B_1$, i.e. $B_1$ is a space of scalar infinite sequences. For any element $f \in B$ and each $i = \overline{1, m}$, we can choose in every matrix component of $f$ the diagonal element at the position $(i, i)$ and obtain the element $f^{(ii)} \in B_1$, by combining these diagonal elements. Taking equation~\eqref{mainSi} into account, one can construct the compact linear operators $\tilde R^{(ii)}(x) \colon B_1 \to B_1$, $i = \overline{1, m}$, analogous to $\tilde R(x)$ and such that
\begin{equation} \label{maini}
\tilde \psi^{(ii)}(x) = \psi^{(ii)}(x) (\mathcal I_1 + \tilde R^{(ii)}(x)),
\quad i = \overline{1, m}, \quad x \in [0, \pi],
\end{equation}
where $\mathcal I_1$ is the identity operator in $B_1$. Analogously to Theorem~\ref{thm:solve}, we obtain the following result.

\begin{thm} \label{thm:graph}
For every $x \in [0, \pi]$ and each $i = \overline{1, m}$, the equation~\eqref{maini} is uniquely solvable in the Banach space $B_1$. 
\end{thm}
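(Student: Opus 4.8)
The plan is to reduce Theorem~\ref{thm:graph} to the already-established unique solvability of the full matrix main equation in Theorem~\ref{thm:solve}, exploiting the diagonal structure of all the relevant objects. First I would record the key structural fact that drives everything: since the potential $Q(x) = \diag\{q_j(x)\}_{j=1}^m$ is diagonal, the solutions $S(x,\la)$ and $\tilde S(x,\la)$ of~\eqref{eqv} with diagonal initial data are themselves diagonal, and hence so is $\tilde D(x,\la,\mu)$ by its defining formula~\eqref{defD}. This means that the coupling between different indices $i$ in the operator $\tilde R(x)$ completely decouples: the $(i,i)$ diagonal component of the output $(f\,\tilde R(x))_n(\rho)$ depends only on the $(i,i)$ diagonal components of the input, which is precisely the content of the scalar equations~\eqref{mainSi}. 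I would make this decoupling explicit by describing the projection $f \mapsto f^{(ii)}$ from $B$ onto $B_1$ and verifying that it intertwines $\tilde R(x)$ with $\tilde R^{(ii)}(x)$, so that~\eqref{main} projects componentwise onto~\eqref{maini}.

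Next I would invoke the operator-level inverse constructed in the proof of Theorem~\ref{thm:solve}. There it is shown that $(\mathcal I + \tilde R(x))^{-1} = \mathcal I - R(x)$, where $R(x)$ is built from the non-tilde quantities $S$, $D$ via~\eqref{defR}--\eqref{defRkn}. Since the non-tilde problem $L$ also has diagonal potential, $S(x,\la)$ and $D(x,\la,\mu)$ are diagonal as well, so the very same diagonal decoupling applies to $R(x)$: it restricts to a bounded operator $R^{(ii)}(x)\colon B_1 \to B_1$. Applying the scalar projection to the two operator identities $(\mathcal I - R(x))(\mathcal I + \tilde R(x)) = \mathcal I$ and $(\mathcal I + \tilde R(x))(\mathcal I - R(x)) = \mathcal I$ yields the analogous scalar identities
\[
(\mathcal I_1 - R^{(ii)}(x))(\mathcal I_1 + \tilde R^{(ii)}(x)) = \mathcal I_1, \qquad
(\mathcal I_1 + \tilde R^{(ii)}(x))(\mathcal I_1 - R^{(ii)}(x)) = \mathcal I_1,
\]
for each $i = \overline{1,m}$ and each $x \in [0,\pi]$. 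This shows directly that $(\mathcal I_1 + \tilde R^{(ii)}(x))$ is invertible with bounded inverse $(\mathcal I_1 - R^{(ii)}(x))$, whence~\eqref{maini} is uniquely solvable.

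The only genuine point requiring care is establishing the diagonal decoupling of the operators rigorously, rather than merely asserting it from the diagonality of $S$, $\tilde S$, and $\tilde D$. In the bilinear expression~\eqref{defRkn} the input matrices $f_k(\rho_{lj\eta})$ are \emph{not} assumed diagonal, so one must check that when $\tilde D$ and the weight matrices $\al'_{lj\eta}$ are diagonal, the off-diagonal entries of the input cannot feed into the $(i,i)$ diagonal entry of the output. This is where I expect the main obstacle to lie: the weight matrices $\al'_{lj\eta}$ from~\eqref{defal} are in general full Hermitian matrices, \emph{not} diagonal, so the claimed decoupling is not immediate. I would therefore argue that, because $S$ and $\tilde D$ are diagonal, the matrix product $f_k(\rho_{lj0})\,\al'_{lj0}\,\tilde D(x,\rho_{lj0}^2,\rho^2)$ has its $(i,i)$ entry determined by $\tilde D^{(ii)}$ together with the $i$th row of $f_k(\rho_{lj0})\al'_{lj0}$; one then restricts attention to the case where $f$ itself lies in the image of the diagonal embedding $B_1 \hookrightarrow B$, i.e. $f_k(\rho)$ is diagonal for all $\rho$. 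On that invariant subspace the off-diagonal entries of $\al'_{lj0}$ are annihilated by the diagonal $f_k$ and the diagonal $\tilde D$, so the $(i,i)$ output involves only $f_k^{(ii)}$, $\al_{lj0}'^{(ii)}$ and $\tilde D^{(ii)}$, matching~\eqref{mainSi}. Since $\tilde\psi(x)$ is the image under the embedding of the scalar data (all $\tilde S_{nks}$ being diagonal), the solution stays on this subspace, and the reduction to $B_1$ is legitimate. Once this subspace-invariance is pinned down, the rest follows verbatim from Theorem~\ref{thm:solve}.
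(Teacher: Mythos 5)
You correctly locate the crux --- the weight matrices $\al'_{lj\eta}$ are full Hermitian matrices, so diagonality of $S$, $\tilde S$, $D$, $\tilde D$ alone does not decouple the components --- but your proposed repair does not close the gap. The subspace of $B$ consisting of sequences of \emph{diagonal} matrices is not invariant under $\tilde R(x)$ or $R(x)$: if $f_k(\rho_{lj0})$ is diagonal, the term $f_k(\rho_{lj0})\,\al'_{lj0}\,\tilde D(x,\la_{lj0},\rho^2)$ has $(a,b)$ entry $(f_k)_{aa}\,(\al'_{lj0})_{ab}\,\tilde D^{(bb)}$, which is nonzero off the diagonal whenever $\al'_{lj0}$ is (e.g.\ $\al_{11}=\frac{1}{2\pi}T$ in Section~6 is a full matrix). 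For the same reason the projection $f\mapsto f^{(ii)}$ does not intertwine compositions: the $(i,i)$ entry of a product $\al'_{rq\eta}\,D\,\al'_{lj0}\,\tilde D$ occurring in $R(x)\tilde R(x)$ equals $\sum_{a}(\al'_{rq\eta})_{ia}\,D^{(aa)}\,(\al'_{lj0})_{ai}\,\tilde D^{(ii)}$ and thus involves \emph{off-diagonal} entries of the weight matrices, so $(R(x)\tilde R(x))^{(ii)}\ne R^{(ii)}(x)\tilde R^{(ii)}(x)$ and the scalar operator identities cannot be obtained by ``projecting'' the matrix identities from the proof of Theorem~\ref{thm:solve}. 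Note also that unique solvability of the matrix equation~\eqref{main} by itself yields only \emph{existence} of a solution of~\eqref{maini} (the diagonal of $\psi$), not uniqueness in $B_1$: a scalar null element $g$ of $\mathcal I_1+\tilde R^{(ii)}(x)$ embeds to the sequence with $g$ in the $(i,i)$ position and zeros elsewhere, whose image under $\mathcal I+\tilde R(x)$ vanishes only in its $(i,i)$ entry, so the matrix uniqueness gives no contradiction.

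What does work --- and what the paper's ``analogously to Theorem~\ref{thm:solve}'' actually requires --- is to rerun the contour-integration argument at the scalar level. Because exactly \emph{one} factor $\hat M(\xi)$ sits between two diagonal factors in the integrand, the $(i,i)$ entry of $D(x,\la,\xi)\hat M(\xi)\tilde D(x,\xi,\mu)$ is $D^{(ii)}(x,\la,\xi)\hat M^{(ii)}(\xi)\tilde D^{(ii)}(x,\xi,\mu)$; computing residues of this \emph{scalar} integrand produces the scalar analogue of~\eqref{contD} in which only the diagonal entries $\al_{lj\eta}'^{(ii)}$ appear. (By contrast, taking the $(i,i)$ entry of the already-summed identity~\eqref{contD} gives a different, unusable relation, for the reason above.) The same remark applied to~\eqref{intS} justifies~\eqref{mainSi}. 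With the scalar relation in hand, $(\mathcal I_1-R^{(ii)}(x))(\mathcal I_1+\tilde R^{(ii)}(x))=\mathcal I_1$ and its counterpart follow, and the remainder of your plan --- boundedness and compactness of $R^{(ii)}(x)$ via the scalar versions of Lemma~\ref{lem:estRkn} and Theorem~\ref{thm:R} --- goes through verbatim.
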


The main equations~\eqref{maini} can be used at step~1 of Algorithm~\ref{alg:graph} for solving the local inverse problems for $i = \overline{1, m-1}$. Theorem~\ref{thm:graph} justifies this step. Step~2 of Algorithm~\ref{alg:graph} is described in~\cite{Yur05}, and we do not elaborate into that issue in the present paper.

\section{Example}

In this section, we solve Inverse Problem~\ref{ip:1} for the following example.
Put $m = 3$. Consider the model problem $\tilde L = L(\tilde Q(x), \tilde T, \tilde H)$ with $\tilde Q(x) \equiv 0$, $\tilde H = 0$ and
\begin{equation} \label{defT}
\tilde T = \frac{1}{3} \begin{bmatrix} 
                    1 & 1 & 1 \\
                    1 & 1 & 1 \\
                    1 & 1 & 1
                \end{bmatrix}, \quad
\tilde T^{\perp} = I_3 - \tilde T = \frac{1}{3} \begin{bmatrix}
                    2 & -1 & -1 \\
                    -1 & 2 & -1 \\
                    -1 & -1 & 2
                \end{bmatrix}.
\end{equation}
This matrix Sturm-Liouville problem $\tilde L$ is equivalent to the Sturm-Liouville eigenvalue problem on the star-shaped graph \eqref{eqvg}-\eqref{mc} with $m = 3$ and $q_j(x) \equiv 0$, $j = \overline{1, 3}$.
It is easy to check, that this problem has the eigenvalues
$$
\tilde \la_{n1} = \left( n -\tfrac{1}{2} \right)^2, \quad 
\tilde \la_{n2} = \tilde \la_{n3} = n^2, \quad n \in \mathbb N,
$$
and the weight matrices
$$
\tilde \al_{n1} = \tfrac{2}{\pi} \left( n - \tfrac{1}{2}\right)^2 \tilde T, \quad
\tilde \al_{n2} = \tilde \al_{n3} = \tfrac{2}{\pi} n^2 \tilde T^{\perp}, \quad n \in \mathbb N.
$$
Suppose that the spectral data $\{ \la_{nk}, \al_{nk} \}_{n \in \mathbb N, \, k = \overline{1, 3}}$ of the problem $L$ differ from $\{ \tilde \la_{nk}, \tilde \al_{nk} \}_{n \in \mathbb N, \, k = \overline{1, 3}}$ only by the first eigenvalue:
\begin{gather} \nonumber
\la_{n1} = a^2, \quad a \in [0, 1), \: a \ne \tfrac{1}{2}, \\ \label{eqdata}
\la_{nk} = \tilde \la_{nk}, \: (n, k) \ne (1, 1), \qquad
\al_{nk} = \tilde \al_{nk}, \: n \in \mathbb N, \, k = \overline{1, 3}.
\end{gather}

Let us recover the potential matrix $Q(x)$ and the coefficient $H$ of the problem $L$ from its spectral data $\{ \la_{nk}, \al_{nk} \}_{n \in \mathbb N, \, k = \overline{1, 3}}$. Since the spectral data of the problems $L$ and $\tilde L$ coincide for $n \ge 2$, their asymptotics coincide, and therefore we can use the problem $\tilde L$ as the model problem for reconstruction of $L$ by the methods of Section~3. Moreover, we have $T = \tilde T$, $T^{\perp} = \tilde T^{\perp}$.

Consider the relation~\eqref{mainS}. Note that $S_{lj0}(x) \equiv S_{lj1}(x)$, if $\la_{lj0} = \la_{lj1}$. Consequently, we get that $S_{nks}(x) \equiv \tilde S_{nks}(x)$ for all $(n, k) \ne (1, 1)$, $s = 0, 1$. Hence we obtain from~\eqref{mainS} the system of two equations with respect to $S_{110}(x)$ and $S_{111}(x)$:
\begin{equation} \label{sys1}
\left\{ \begin{array}{l}
 \tilde S_{110}(x) = S_{110}(x) + S_{110}(x) \al_{11} \tilde D(x, \la_{110}, \la_{110}) - S_{111}(x) \tilde \al_{11} \tilde D(x, \la_{111}, \la_{110}), \\
\tilde S_{111}(x) = S_{111}(x) + S_{110}(x) \al_{11} \tilde D(x, \la_{110}, \la_{111}) - S_{111}(x) \tilde \al_{11} \tilde D(x, \la_{111}, \la_{111}).
\end{array}\right.
\end{equation}

For our example, we have
\begin{gather*}
\la_{110} = a^2, \quad \la_{111} = \tfrac{1}{4}, \quad \al_{11} = \tilde \al_{11} = \frac{1}{2\pi} T, \\
\tilde S_{110}(x) = \frac{\sin(a x)}{a} I_3, \quad \tilde S_{111}(x) = 2 \sin \left( \tfrac{x}{2} \right) I_3, \\
\tilde D(x, \la_{110}, \la_{110}) = \frac{1}{2a^2} \left( x - \frac{\sin 2 a x}{2 a}\right) I_3, \quad \tilde D(x, \la_{111}, \la_{111}) = 2 (x - \sin x) I_3, \\
\tilde D(x, \la_{110}, \la_{111}) = \tilde D(x, \la_{111}, \la_{110}) = 
\frac{1}{a} \left( \frac{\sin \bigl( a - \tfrac{1}{2}\bigr) x}{a - \tfrac{1}{2}} - \frac{\sin \bigl( a + \tfrac{1}{2}\bigr) x}{a + \tfrac{1}{2}} \right) I_3.
\end{gather*}
Consequently, the system~\eqref{sys1} takes the form
\begin{equation} \label{sys2}
\left\{ \begin{array}{l}
    S_{110}(x) (f_{11}(x) T + T^{\perp}) - S_{111}(x) f_{12}(x) T = \dfrac{\sin (a x)}{a} I \\[10pt]
    S_{110}(x) f_{12}(x) T + S_{111}(x) (f_{22}(x) T + T^{\perp}) = 2 \sin \left( \tfrac{x}{2}\right) I,
\end{array} \right.
\end{equation}
where
\begin{gather*}
    f_{11}(x) = 1 + \frac{1}{4\pi a^2} \left(x - \frac{\sin (2 a x)}{2a} \right), \quad f_{22}(x) = 1 - \frac{1}{\pi} (x - \sin x), \\
    f_{12}(x) = \frac{1}{2\pi a} \left( \frac{\sin \bigl(\bigl( a - \tfrac{1}{2}\bigr) x \bigr)}{a - \tfrac{1}{2}} - \frac{\sin \bigl( \bigl( a + \tfrac{1}{2}\bigr)x \bigr)}{a + \tfrac{1}{2}} \right).
\end{gather*}
Solving the system~\eqref{sys2}, we obtain
\begin{gather*}
S_{110}(x) =  \frac{\Delta_1(x)}{\Delta_0(x)} T + \frac{\sin (ax)}{a} T^{\perp}, \quad
S_{111}(x) =  \frac{\Delta_2(x)}{\Delta_0(x)} T + 2 \sin \left( \frac{x}{2}\right) T^{\perp}, \\
\Delta_0(x) := \begin{vmatrix} f_{11}(x) & -f_{12}(x) \\ f_{12}(x) & f_{22}(x) \end{vmatrix}, \quad
\Delta_1(x) := \begin{vmatrix} \tfrac{\sin (a x)}{a} & -f_{12}(x) \\ 2 \sin \left( \tfrac{x}{2}\right) & f_{22}(x) \end{vmatrix}, \quad 
\Delta_2(x) := \begin{vmatrix} f_{11}(x) & \tfrac{\sin (a x)}{a} \\ f_{12}(x) & 2 \sin \left( \tfrac{x}{2} \right)\end{vmatrix}.
\end{gather*}

It follows from \eqref{defeps}, \eqref{eqdata} and the above calculations, that
\begin{equation} \label{exeps}
\eps_0(x) = S_{110}(x) \al_{11} \tilde S_{110}^{\dagger}(x) - S_{111}(x) \tilde \al_{11} \tilde S_{111}^{\dagger}(x) = \frac{1}{2 \pi \Delta_0(x)}
\left( \frac{\Delta_1(x) \sin (a x)}{a} - 2 \Delta_2(x) \sin \bigl( \tfrac{x}{2}\bigr)\right) T.
\end{equation}
Then it is easy to find $Q(x)$ and $H$ by the formulas~\eqref{findQH}.

In particular, for $a = 0.3$, we have $Q(x) = q(x) T$, $H = h T$, where $h$ approximately equals $-0.361838$. The plot of $q(x)$ is presented in Figure~\ref{img:1}.

\begin{figure}[h!]
\begin{center}
\includegraphics[scale = 0.4]{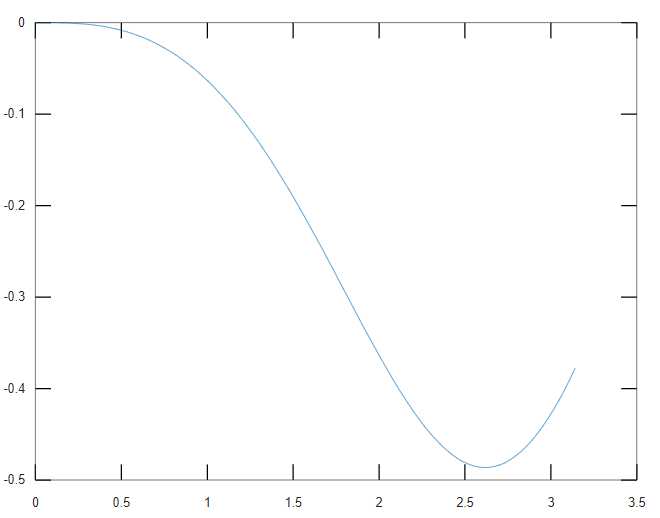}
\end{center}
\caption{Plot of $q(x)$, $x \in [0, \pi]$}
\label{img:1}
\end{figure}

Let us check our calculations, by finding the eigenvalues of the problem $L$. The solution $S(x, \la)$ has the form
$S(x, \la) = T s(x, \la) + T^{\perp} \rho^{-1} \sin \rho \pi$, where $s(x, \la)$ is the solution of the following scalar initial value problem with the constructed potential $q(x)$:
$$
-s''(x, \la) + q(x) s(x, \la) = \la s(x, \la), \quad s(0, \la) = 0, \quad s'(0, \la) = 1.
$$
The eigenvalues of the problem $L$ coincide with the zeros of its characteristic function
$$
\det V(S(x, \la)) = (s'(\pi, \la) - h s(\pi, \la)) \frac{\sin^2 \rho \pi}{\rho^2}.
$$
We have calculated the zeros of $(s'(\pi, \la) - h s(\pi, \la))$ numerically, using the forth-order Runge-Kutta method with the step $\frac{\pi}{1000}$. The zeros in the interval $[0, 50]$ are presented in the following table.
\begin{center}
    \begin{tabular}{|c|c|c|c|c|c|c|c|}
        \hline
         $n$ & 1 & 2 & 3 & 4 & 5 & 6 & 7 \\
         \hline
         $\lambda_{n1}$ & 0.090000 & 2.250000 & 6.250000 & 12.250000 & 20.250000 & 30.250000 & 40.250000 \\
         \hline
    \end{tabular}
\end{center}

Clearly, $\la_{11} = a^2$, $\la_{n1} = \left( n - \tfrac{1}{2}\right)^2$, $n \ge 2$. The multiplier $\frac{\sin^2 \rho \pi}{\rho^2}$ has the zeros $\la_{n2} = \la_{n3} = n^2$, $n \in \mathbb N$. Thus, the eigenvalues of the constructed problem $L$ coincide with the initially given values, and our method works correctly for this example.

It is clear from~\eqref{exeps}, that for $a \in [0, \tfrac{1}{2}) \cup (\tfrac{1}{2}, 1)$ the matrices $\eps_0(x)$ and $\tilde Q(x)$ are nondiagonal, so the problem $L$ is not the Sturm-Liouville problem on the star-shaped graph. This example shows that a simple perturbation of an eigenvalue can withdraw an operator out of the class of differential operators on graphs. In order to remain in this class, perturbations of the spectral data have to be connected with each other by additional conditions. Obtaining such conditions is a challenging topic for future research.

\medskip

{\bf Acknowledgment.} This work was supported by Grant 19-71-00009 of the Russian Science Foundation.

\medskip

\medskip

\noindent Natalia Pavlovna Bondarenko \\
1. Department of Applied Mathematics and Physics, Samara National Research University, \\
Moskovskoye Shosse 34, Samara 443086, Russia, \\
2. Department of Mechanics and Mathematics, Saratov State University, \\
Astrakhanskaya 83, Saratov 410012, Russia, \\
e-mail: {\it BondarenkoNP@info.sgu.ru}

\end{document}